\newcommand{\prob}{\mathbb{P}}
\newcommand{\BE}{\mathbb{E}}
\newcommand{\Var}{\operatorname{Var}\nolimits}
\renewcommand{\Re}{\operatorname{Re}\nolimits}
\newcommand{\BR}{\mathbb{R}}
\newcommand{\CP}{\mathcal{P}}
\newcommand{\CD}{\mathcal{D}}
\newcommand{\rad}{\rho}
\newcommand{\eps}{\varepsilon}
\newcommand{\tphi}{{\widetilde{\varphi}}}
\newcommand{\intfloor}[1]{\left\lfloor #1\right\rfloor}
\newcommand{\ii}{\mathrm{i}}
\newtheorem{thm}{Theorem}
\newtheorem{lemma}[thm]{Lemma}
\newtheorem{proposition}[thm]{Proposition}
\newtheorem{corollary}[thm]{Corollary}
\newtheorem{conjecture}[thm]{Conjecture}
\newtheorem*{conjecture*}{Conjecture}
\theoremstyle{definition}
\newtheorem{definition}{Definition}
\theoremstyle{remark}
\newtheorem*{remark}{Remark}
\newtheorem*{remarks}{Remarks}
\author{Yu.~Yakubovich\thanks{This research was partly supported by the 
President of RF leading science
schools supporting grant NSh-2460.2008.1. Part of this research was made 
during the author's stay at the Erwin Schr\"odinger Institute for Physics
and Mathematics, Vienna, 
during the programme ``Combinatorics and Statistical Physics'' in Spring 2008.}}
\title{Ergodicity of multiplicative statistics}
\begin{document}
\maketitle
\begin{abstract}
For a subfamily of multiplicative measures on integer partitions
we give conditions for properly rescaled associated Young diagrams 
to converge in probability
 to a certain deterministic curve named the limit shape
of partitions.  We provide explicit formulas for the scaling function and the
limit shape covering some known and some new examples.
\end{abstract}

%AMS MSC: 05A17 (primary), 60B10 (secondary)
%Keywords: integer partition, limit shape, degenerate limit, multiplicative measures, ergodicity

\section*{Introduction}
It is now widely known that a random partition of a large integer taken
with equal probability among all partitions of that integer has the Young
diagram which looks (after rescaling) close to a deterministic object
called a limit shape of random partitions.
The discovery of the phenomenon of limit shape formation 
for random partitions of a large integer 
has a long history. First it was mentioned in the paper by H.~N.~V.\
Temperley~\cite{T} in~1952 with heuristic arguments. 
Much later but independently, the principal calculations leading to this 
result was made by
M.~Szalay and P.~Tur\'an \cite{ST} in 1977, however they did not state their
result in the modern way.  It was done by A.~Vershik and stated in his joint
paper with S.~Kerov~\cite{VK} in~1985.  

Later a new proof of the same result was found based on the fact that
the uniform measures on partitions of $n$ are just the product measures 
on the space of finite sequences
restricted to an affine subspace.  (The probabilist would say that a random
partition of $n$ is just a sequence of independent random variables with
 specific distributions conditioned to have some weighted sum equal~$n$.)
This technique seems to be first applied to random partitions by 
B.~Fristedt~\cite{F}; now it is often referred to 
as Fristedt's conditional device.
Later it has been frequently used by various authors
 in the related problems, see \cite{P,CPSW,GH}, to name just a few references. 
Vershik \cite{V-FAA} noted that the similar technique is applicable to 
a wider range of problems with the same property that the measure is
a product measure restricted to a certain affine subset. Vershik called 
such measures on partitions \textit{multiplicative}; we give the 
precise definition in Section~\ref{sec:multfam}.

Similar ``limit shape type'' results have appeared in diverse contexts
including some probability measures on partitions, both multiplicative and 
not.  One of the first results of this type goes back to a seminal
paper by P.~Erd\H{o}s and J.~Lehner~\cite{EL}: it can be read from their paper
that rescaled Young diagrams of strict partitions of a large integer concentrate
around a certain limit shape.  This is also a multiplicative case, although
Erd\H{o}s and Lehner did not use the related technique.   A.~Comtet et 
al.~\cite{CMOS} recently found the limit shape for the generalization 
of this case, namely for partitions such that the difference 
between parts exceed
some fixed number~$p$. For $p\ge2$ this family is not multiplicative.
 R.~Cerf and R.~Kenyon \cite{CK} 
confirmed Vershik's conjecture that the limit shape exists also for plane 
partitions. 

In his paper \cite{V-FAA} Vershik introduced several families of multiplicative
measures on partitions and stated that for these measures limit shapes also 
appear in the proper scaling. He called such examples \textit{ergodic} (see
Definition~\ref{def:erg} below) and asked a
general question about conditions for ergodicity of multiplicative
measures.  In this note we give a partial answer to this question although
it remains still open in the full generality. Some connections between 
the existence of limit shape and other asymptotic characteristics 
of measures on partitions is discussed in~\cite{EG08}. The
whole family of multiplicative measures is naturally parameterized by a
sequence of functions $f_k$ analytic in some neighborhood of zero, as explained
in Section~\ref{sec:multfam}. We restrict ourself to a subfamily of 
multiplicative measures such that each member of this sequence is
a power of some fixed function: $f_k(z)=f(z)^{b_k}$, $b_k\ge0$, with some
additional constraints on $f$ and sequence $b_k$.  This restriction looks
quite confining at a first glance  but it allows us to find  exact
formulas for the limit shape and to catch what happens in a more general
case. This family includes, for instance, the measures on partitions which
arise in connection with Meinardus' Theorem ($f(z)=1/(1-z)$), 
see~\cite[Ch.~6]{Andrews}. B.~Granovsky et~al.~\cite{GES} applied 
recently a probabilistic technique going back to Khinchin to this problem
and improved Meinardus' results; our approach partly overlaps with one
used in~\cite{GES}.

The lack of natural nonergodic multiplicative examples 
makes it harder to answer Vershik's question.  
Actually, essentially the only well studied example is the so-called
Ewens measure on partitions. We briefly describe its construction, 
see~\cite{ABT} (where the term Ewens sampling
formula is used) for a more detailed exposition.
Take a random permutation $\pi$ from the symmetric group $S_n$ 
with probability proportional to $\theta^{l(\pi)}$ where $\theta>0$ is 
a parameter and $l(\pi)$ is the number of cycles in permutation $\pi$; 
$\theta=1$ corresponds to the uniform measure on $S_n$.
Given $\pi$, consider the partition of $n$ on cycle lengths of $\pi$. 
The induced probability measure on partitions is called the Ewens measure.
 Mapping partition $\lambda=(\lambda_1,\lambda_2,\dots)$ with the usual
order $\lambda_1\ge\lambda_2\ge\dots$ 
to a simplex $\nabla=\{(x_1,x_2,\dots):\sum x_i\le 1\text{ and }x_1\ge 
x_2\ge\dots\}$ by dividing parts of $\lambda$ by its weight $\sum \lambda_i$
induces the sequence of discrete measures  on $\nabla$; taking their weak limit
in the standard topology  leads to the Poisson--Dirichlet measure
$\mathcal{PD}(\theta)$ on $\nabla$. This measure is not concentrated on 
the unique element of~$\nabla$, so the Ewens measure is not ergodic.

In this note we give another examples of nonergodic behavior 
(Proposition~\ref{prop:nonerg}) in a slightly weaker sense.  However all these
examples are degenerate.

The rest of the paper is organized as follows.  In the next section we 
give a precise definition of multiplicative measures on partitions
and present some basic facts about them. In particular, we introduce the 
notions of grand and small canonical ensembles of partitions. 
In the end of the section we formulate
further assumptions we impose on the multiplicative measure in this note.
In Section~\ref{sec:erg} we present a definition of ergodicity and discuss
its basic consequence.  Section~\ref{sec:erglce} is devoted to ergodicity
in the grand canonical ensemble. We give necessary and sufficient conditions
for ergodicity in the considered class of multiplicative measures 
(Theorem~\ref{thm:lceerg}) and
provide an explicit formula for the limit shape in the ergodic case.
In Section~\ref{sec:ergsce} we give sufficient conditions for ergodicity
in the small canonical ensemble.  We conclude the paper with examples in 
Section~\ref{sec:examples}.

\section{Multiplicative families}\label{sec:multfam}

The multiplicative families of measures on partitions were defined in 
\cite{V-FAA} in the following way. For each $n=1,2,\dots$ let $\mu^{(n)}$ 
be a probability measure defined
on a set $\CP(n)$ of integer partitions of $n$. For partition 
$\lambda\in\CP(n)$ define \textit{completion numbers} or \textit{counts} 
$R_k(\lambda)=\#\{i:\lambda_i=k\}$, $k=1,2,\dots$, 
that is the number of parts $k$ in partition~$\lambda$.  Measure $\mu^{(n)}$
makes $R_k$ random variables: $\prob[R_k=j]=\mu^{(n)}\{\lambda\in\CP(n):
R_k(\lambda)=j\}$.  These random variables are obviously dependent since
the relation
\[
N(\lambda):=\sum_{k=1}^\infty kR_k(\lambda)=n
\]
holds for all $\lambda\in\CP(n)$, i.e.\ $\mu^{(n)}$-almost sure.  We also 
introduce a set $\CP(0)=\{\varnothing\}$  and the trivial 
probability measure $\mu^{(0)}$ on it.

\begin{definition}\label{def:mult}
The family of probability measures on partitions $\mu^{(n)}$ is called 
\textit{multiplicative} if there exists a sequence of positive numbers 
$\{\bar a_n\}_{n\ge0}$ such that $\sum_n \bar a_n=1$ and counts $R_k$ 
are mutually independent 
with respect to the convex combination $\bar\mu:=\sum_n \bar a_n\mu^{(n)}$.
\end{definition}

Independence of $R_k$ with respect to $\bar\mu$ means that there exists a
rectangular array of nonnegative numbers $\bar g_{k,j}$, $k\ge1$, $j\ge0$ and
$\sum_{j=0}^\infty \bar g_{k,j}=1$ for any $k$, 
such that for any partition $\lambda\in\CP=\cup_{n=0}^\infty \CP(n)$
\begin{equation}\label{eq:defmu1}
\bar\mu\{\lambda\}=\prod_{k=1}^\infty \bar g_{k,R_k(\lambda)}\,.
\end{equation}
Introduce normalized coefficients $g_{k,j}=\bar g_{k,j}/\bar g_{k,0}$ 
(division by $\bar g_{k,0}$ is possible since $\prod_k \bar g_{k,0}=\bar a_0>0$ 
by definition) and consider functions $f_k(x)=\sum_{j=0}^\infty g_{k,j}x^j$; 
these are analytic functions at least in the unit disk.  For a 
positive parameter $x$ let us define
a family of measures $\mu_x$ on the set of all integer partitions $\CP$ by
\[
\mu_{x}\{\lambda\}=\prod_{k=1}^\infty\frac{g_{k,R_k(\lambda)} x^{kR_k(\lambda)}}
  {f_k(x^k)}
=\frac{x^{N(\lambda)}}{F(x)}\prod_{k=1}^\infty g_{k,R_k(\lambda)}
\]
where 
\begin{equation}\label{eq:Fasprod}
F(x)=\prod_{k=1}^\infty f_k(x^k)\,.
\end{equation}
Inequalities $1\le f_k(x^k)\le 1/\bar g_{k,0}$ valid for $0\le x\le 1$ 
(together with $\bar g_{k,0}>0$ stated above) ensure that the products 
above converge at least for these $x$. Moreover,
summation over all $\lambda\in\CP$ yields that $\mu_x$ are probability measures:
\[
\mu_x\CP = \sum_{\lambda\in\CP}\mu_x\{\lambda\}
=\sum_{(r_1,r_2,\dots)}\prod_{k=1}^\infty\frac{g_{k,r_k} x^{kr_k}} {f_k(x^k)}
=\prod_{k=1}^\infty\sum_{r_k=0}^\infty\frac{g_{k,r_k} x^{kr_k}} {f_k(x^k)} = 1
\]
where the middle sum is taken over all sequences $(r_1,r_2,\dots)$ 
of nonnegative integers with finitely many nonzero terms (such sequences are
in one-to-one correspondence with partitions via $r_k=R_k(\lambda)$).

Equation \eqref{eq:defmu1} and definition of $\bar\mu$ imply that for any $n$
if $\lambda\in\CP(n)$ then
\[
\mu_x\{\lambda\}=\frac{x^n}{\bar a_0F(x)}\bar\mu\{\lambda\}
=\frac{\bar a_nx^n}{\bar a_0 F(x)}\mu^{(n)}\{\lambda\}\,.
\]
Thus measures $\mu_x$ are convex combinations of $\mu^{(n)}$.  Introducing
$a_n=\bar a_n/\bar a_0$ makes it possible to write down an expression for 
measures of partition $\lambda \in\CP(n)$ as
\begin{equation}\label{eq:defmun}
\mu^{(n)}\{\lambda\}=\frac{F(x)}{a_nx^n}\mu_x\{\lambda\}
=\frac{1}{a_n}\prod_{k=1}^\infty g_{k,R_k(\lambda)}
\end{equation}
and the Taylor decomposition of $F(\cdot)$ as
\[
F(x)=\sum_{n=0}^\infty a_nx^n\,.
\]
Since $a_n>0$, this is the analytic function in the unit disk, however the 
actual radius of convergence $\rad$ can be greater (and even infinite).

%\subsection{Some general formulas}

Function $F$ plays a r\^ole of normalization factor, so a man
with background in statistical mechanics would call it a \textit{partition
function}. We utilize this terminology and extend the analogy with statistical
mechanics by using terms \textit{grand canonical ensemble} of partitions for
the set $\CP$ equipped with measure $\mu_x$ and \textit{small canonical
ensemble} for the pair $(\CP(n),\mu^{(n)})$.  Further discussion of these 
analogy and terms can be found in~\cite{Sinai,V-FAA}.

Partition function $F$ is also closely related to the probabilistic notion of
a probability generating function.  Moments of $N$ can be expressed in
terms of $F$ as 
\begin{equation}\label{eq:exNm}
\BE_x N^m = \frac{1}{F(x)}\left(x\frac{d}{dx}\right)^mF(x)\,,
\qquad m=0,1,2,\dots,
\end{equation}
where $\BE_x$ is an expectation operator with respect to measure $\mu_x$.
Similar formula expresses moments of $R_k$ in terms of $f_k$:
\[
\BE_x R_k^{\,m}=\left.\frac{1}{f_k(z)}\left(z\frac{d}{dz}\right)^mf_k(z)\right|_{z=x^k}\,.
\]
Since $N=\sum kR_k$, mean and variance of $N$ can be also easily expressed
in terms of functions~$f_k$ and get a particularly simple form in terms
of its logarithmic derivative $h_k(z)=f_k'(z)/f_k(z)$:
\begin{align} \label{eq:EN-sum}
\BE_x N&{}=\sum_{k=1}^\infty \frac{k x^k f'_k(x^k)}{f_k(x^k)}=
\sum_{k=1}^\infty k x^k h_k(x^k)\,,\\ \notag
\Var_x N&{}=\sum_{k=1}^\infty 
   \frac{k^2\bigl(x^k\left(f_k'(x^k)+x^kf_k''(x^k)\right)f_k(x^k)
         -x^{2k}\left(f_k'(x^k)\right)^2\bigr)}{\left(f_k(x^k)\right)^2}\\
\label{eq:varN-sum}
&{}=\sum_{k=1}^\infty k^2 \bigl(x^k h_k(x^k)+x^{2k}h_k'(x^k)\bigr)\,.
\end{align}

Note that $F$ itself does not define 
measures $\mu_x$ or $\mu^{(n)}$ but $F$ along with its 
decomposition~\eqref{eq:Fasprod} does.  However this decomposition is not
unique. Indeed, given $F(\cdot)$ we could have taken 
$\bar a_n=a_nx_0^{\,n}/F(x_0)$ in 
Definition~\ref{def:mult}, for some $x_0\in(0,\rad)$, 
and constructed a new function $\hat F(\cdot)$ in the similar way.  
However it would satisfy $\hat F(x)=F(xx_0)$  and 
 $\hat f_k(x)=\hat f_k(xx_0^{\,k})$,
as can be easily checked. Up to this change of variable $F$ and its
 decomposition is uniquely defined.

\subsection*{The case considered in this note}

Although multiplicativity is a rather restrictive requirement on 
measures $\mu^{(n)}$ the range of multiplicative measures is quite big.
In this note we consider only measures $\mu^{(n)}$ such that
after some appropriate change of variables described in the previous paragraph
 $f_k(x)=f(x)^{b_k}$ for some function $f(\cdot)$ and sequence of nonnegative
numbers $\{b_k\}$, i.\;e.
\begin{equation}\label{eq:ourF}
F(x) = \prod _{k=1}^\infty f(x^k)^{b_k}\,,\qquad b_1=1.
\end{equation}
The choice $b_1=1$ eliminates the possibility of an interplay 
between the sequence $\{b_k\}$ and function $f$: for any
$b>0$ one can replace $f$ by $f^b$ and $\{b_k\}$ by $\{b_k/b\}$ to get the 
same measure $\mu_x$.  This normalization is always possible
since Definition~\ref{def:mult} implies that $b_1>0$ (otherwise $a_1=0$ which
is prohibited by the definition).

The natural requirement on the Taylor coefficients of $f(z)^{b_k}$ to be
positive may imply certain restrictions on $b_k$.
We impose another requirement on the sequence $\{b_k\}$, 
namely we assume that partial sums
\begin{equation}\label{eq:Bk}
B_k=\sum_{j=1}^k b_j = k^\beta \ell(k),\qquad \beta>0,
\end{equation}
where $\ell(\cdot)$ is a regularly varying function in the sense of Karamata,
i.\;e.\ it is measurable and for each fixed $y\in(0,\infty)$ there 
exists $\lim_{x\to\infty} \ell(xy)/\ell(x)=1$, see~\cite{BGT}. 

For certain statements below these assumptions on behavior of $b_k$ are not
enough and additional conditions are required. In order to formulate 
the first of them we 
introduce for a positive real $s$ the set $K_s$ of integers behaving
 similar to the arithmetic progression with the difference $s$.  More formally,
define 
\begin{equation}\label{eq:defKs}
K_s=\{k\in\mathbb{Z}_+:\exists j\text{ such that }|k-sj|<1/2\}\,.
\end{equation}
  Obviously, for
$s\le1$ these sets coincide with $\mathbb{Z}_+$ but for $s>1$ holes in $K_s$
occur.  In some statements we shall need the following regularity assumption
in addition to~\eqref{eq:Bk}:
there exists $\chi\in(0,1)$ such that for any $s\ge 2$
\begin{equation}\label{eq:Bkreg}
\sum_{\substack{j\le k\\j\in K_s}}b_j\le \chi B_k\,.
\end{equation}

If $\beta>2$ assumption \eqref{eq:Bk} is strong enough for all our purposes. 
However for $0<\beta\le2$ we need more detailed asymptotics 
of partial sums $B_k$:
\begin{equation}\label{eq:Bkdetailed}
B_k=\theta k^\beta+O\left(k^{\beta-\zeta}\right),\qquad\qquad k\to\infty
\end{equation}
for some constants $\beta,\theta>0$ and $\zeta>1-\beta/2$.

We also suppose that for some $\rad_1\in(0,\infty]$,  $f(x)$ is
finite for $x\in(0,\rad_1)$ and has a nonremovable singularity at $x=\rad_1$;
the nonnegativity of the  Taylor coefficients implies that $f(x^k)$  
is an analytic function in a disk of radius $\rad_k=\rad_1^{1/k}$ 
($\rad_k=\infty$ if $\rad_1=\infty$).  If the singularity happens 
at $\rad_1\le 1$ we shall often require that it is a pole. 
If $\rad_1$ is finite the change of variables $x\mapsto \rad_1 x$ can make the
radius of convergence of all function $f_k(x^k)$ equal~$1$ however
functions $f_k(\cdot)$ won't be equal after it.
The following simple statement holds. 

\begin{proposition}\label{prop:radius} Let $F$ be defined by \eqref{eq:ourF}
and condition~\eqref{eq:Bk} holds.
If $\rad_1<1$ then $F$ 
is analytic in the disk $|z|<\rad_1$ and has a singularity at $\rad=\rad_1$. 
If $\rad_1\ge1$ {\em(}in particular $\rad_1=\infty${\em)} 
then $F(\cdot)$ is analytic in the unit disk and has a
singularity at~$\rad=1$.
\end{proposition}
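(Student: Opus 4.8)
The statement amounts to identifying the radius of convergence $\rad_F$ of $F$ as $\min(\rad_1,1)$ and checking that the boundary point on the positive real axis is a genuine singularity. Since $F$ has nonnegative Taylor coefficients (being a product of power series with nonnegative coefficients), the second part will be automatic once the first is known: by Pringsheim's theorem, a power series with nonnegative coefficients and finite radius of convergence is singular at that radius (and $\rad_F\le1$ is established along the way). So I would concentrate on showing $\rad_F=\rad$ with $\rad\defeq\min(\rad_1,1)$, proving the two inequalities separately. Throughout I use $f(0)=1$, which holds because $g_{k,0}=1$ and $b_1=1$.

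For $\rad_F\ge\rad$, the plan is to verify that the infinite product $\prod_{k\ge1}f(z^k)^{b_k}$ converges locally uniformly in the disk $|z|<\rad$. Fix $r<\rad$. Each factor is a power series with nonnegative coefficients and constant term $f(0)^{b_k}=1$, analytic in $|z|<\rad_k=\rad_1^{1/k}$, and $\rad_k\ge\min(\rad_1,1)=\rad>r$, so all factors are analytic on $\{|z|\le r\}$. Because $f$ is analytic at $0$ with $f(0)=1$ we have $\log f(w)=O(w)$ near $0$; since $r^k\to0$, this yields for large $k$ a bound of the shape $\sup_{|z|\le r}|f(z^k)^{b_k}-1|\le f(r^k)^{b_k}-1=O(b_k r^k)$ (using nonnegativity of all the coefficients involved). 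Assumption~\eqref{eq:Bk} makes $b_k\le B_k=k^\beta\ell(k)$ grow only subexponentially, so $\sum_k b_k r^k<\infty$; hence the product converges uniformly on $|z|\le r$ to an analytic function, and $r<\rad$ being arbitrary, $F$ is analytic in $|z|<\rad$.

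For $\rad_F\le\rad$ I would argue differently in the two regimes. If $\rad_1<1$, I would use the factorization $F(z)=f(z)\cdot\prod_{k\ge2}f(z^k)^{b_k}$, legitimate since $b_1=1$; the second factor is a power series with nonnegative coefficients and constant term $1$, so every Taylor coefficient of $F$ dominates the corresponding one of $f$, whence $\rad_F\le\rad_1$ because $f$ has radius of convergence exactly $\rad_1$. (One cannot instead argue that $F$ blows up at $\rad_1$, since $f$ itself need not, unless its singularity is a pole — which is why coefficient domination is the right tool here.) If $\rad_1\ge1$ (including $\rad_1=\infty$), I would show directly that $F(x)\to\infty$ as $x\uparrow1$. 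Put $w_0=\rad/2\in(0,1)$ and $\delta=\log f(w_0)>0$; here $\delta>0$ because the singularity at $\rad_1$ forces $f$ to be non-constant, hence strictly increasing on $[0,\rad_1)$ from $f(0)=1$. For $x\in(0,1)$ set $K_x=\lfloor\log w_0/\log x\rfloor$, so that $x^k\ge w_0$ whenever $k\le K_x$; discarding the nonnegative tail,
\[
\log F(x)=\sum_{k\ge1}b_k\log f(x^k)\ \ge\ \delta\sum_{k\le K_x}b_k\ =\ \delta\,B_{K_x},
\]
and $B_{K_x}=K_x^\beta\ell(K_x)\to\infty$ as $x\uparrow1$, since $K_x\to\infty$ and $\beta>0$. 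Thus $F$ is unbounded near $1$, so cannot be analytic there, and $\rad_F\le1=\rad$.

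The uniform product estimate and the coefficient comparison are routine. The step deserving the most attention is the regime $\rad_1\ge1$: there the singularity of $F$ at $1$ is created purely by the accumulation of infinitely many factors, and it is precisely the divergence $B_k\to\infty$ guaranteed by~\eqref{eq:Bk} with $\beta>0$ that produces it; notably, neither the refined asymptotics~\eqref{eq:Bkdetailed} nor the regularity condition~\eqref{eq:Bkreg} are needed for this Proposition.
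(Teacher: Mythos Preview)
Your proof is correct, and in one respect more careful than the paper's. The convergence half is handled similarly in both: the paper bounds $f(y)\le 1+(f(x)-1)y/x$ via convexity to dominate the product, while you use $\log f(w)=O(w)$ to get $f(r^k)^{b_k}-1=O(b_k r^k)$; either estimate, combined with the subexponential growth of $b_k$ from~\eqref{eq:Bk}, gives locally uniform convergence on $|z|<\rad$. For the singularity when $\rad_1\ge1$, your quantitative lower bound $\log F(x)\ge\delta B_{K_x}\to\infty$ is just a sharpened version of the paper's one-line observation that $F(1)=\prod_k f(1)^{b_k}=\infty$ because $B_k\to\infty$.

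The genuine difference is the case $\rad_1<1$. The paper concludes that $F(x)\to\infty$ as $x\to\rad_1$ ``since so does the first factor'', which silently assumes that $f$ itself blows up at $\rad_1$ --- true when the singularity is a pole (as later hypotheses in the paper require), but not for a general nonnegative-coefficient $f$ with, say, an algebraic branch point. Your route via coefficient domination ($a_n\ge[z^n]f$ because the cofactor $\prod_{k\ge2}f(z^k)^{b_k}$ has nonnegative coefficients and constant term~$1$) plus Pringsheim's theorem sidesteps this entirely and proves the Proposition as stated, without any implicit restriction on the nature of the singularity of~$f$.
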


\begin{proof}
If $x<\rad_1<1$ then for all $0<y<x$ inequality $f(y)\le 1+(f(x)-1)y$ holds
since function $f$ is convex. Consequently the product \eqref{eq:Fasprod}
evaluated at point $y$ is dominated by the converging product 
$\prod_{k=1}^\infty \bigl(1+(f(x)-1)y^k\bigr)^{b_k}$.  On the other hand,
$F(x)\to\infty$ as $x\to\rad_1$ since so does the first
factor in~\eqref{eq:Fasprod}.

If $x<1\le \rad_1$ the same argument shows the convergence of the infinite 
product evaluated at~$x$, but $F(1)=\prod_k f(1)^{b_k} =\infty$
since $B_k\to\infty$ by~\eqref{eq:Bk}.
\end{proof}

\section{Ergodicity}\label{sec:erg}

Given a partition $\lambda$ of $n$ we consider its Young diagram which can
be defined as a subgraph of the function
\[
\varphi_\lambda(t)=\sum_{k>t}R_k(\lambda),
\qquad t\ge 0.
\]
For a sequence of positive numbers $\alpha^{(n)}$ we consider
its scaled version
\[
\tphi^{(n)}_\lambda(t)=\frac{\alpha^{(n)}}{n}\varphi_\lambda(\alpha^{(n)}t)
=\frac{\alpha^{(n)}}{n}\sum_{k>\alpha^{(n)}t}R_k(\lambda)\,.
\]
Taking $\lambda\in\CP(n)$  at random with probability $\mu^{(n)}\{\lambda\}$
makes these random functions.

\begin{definition}\label{def:erg}
We call a family of measures $\mu^{(n)}$ \textit{ergodic} if
there exists a sequence $\alpha^{(n)}$ and a piecewise continuous function 
$\varphi:\BR_+\to\BR_+$
such that $\int_0^\infty \varphi(t)\,dt=1$ and 
for any finite collection $0< t_1<\dots<t_\ell$ of its continuity points 
values $\tphi^{(n)}_\lambda(t_j)$, $j=1,\dots,\ell$, 
converge to $\varphi(t_j)$ in probability,
that is for any $\eps>0$
\[
\lim_{n\to\infty} \mu^{(n)}\bigl\{\lambda\colon
  \bigl|\tphi^{(n)}_\lambda(t_j)-\varphi(t_j)\bigr|<\eps \text{ for all }
  j=1,\dots,\ell\bigr\}=1\,.
\]
The function $\varphi$ is called a \textit{limit shape} of partitions.
\end{definition}

\begin{remarks}
1.\enspace If a sequence $\alpha^{(n)}$ exists it is essentially unique meaning that 
for $\alpha_1^{(n)}$ and $\alpha_2^{(n)}$ two such sequences 
$\alpha_1^{(n)}/\alpha_2^{(n)}\to c\in(0,\infty)$ and function $\varphi$ 
is appropriately transformed.

2.\enspace If a function $\varphi$ exists it is nonincreasing since all
$\tphi^{(n)}_\lambda$ do not increase.
\end{remarks}

The notion of ergodicity can be also defined in the grand canonical ensemble. 
However it should be done in slightly different way to keep the main
advantage of measures $\mu_x$ that $\varphi_\lambda(t)$ is a sum of
independent random variables. Given a positive function $\alpha_x$ defined for
$x\in (0,\rad)$ define the scaled Young diagram as
\[
\tphi_{x;\lambda}(t)=\frac{\alpha_x}{\BE_x N}\,\varphi_\lambda(\alpha_xt)
=\frac{\alpha_x}{\BE_x N}\sum_{k>\alpha_xt}R_k(\lambda)\,.
\]
Scaling here depends on $x$ so $\int_0^\infty\tphi_{x;\lambda}(t)dt=1$
does \textit{not} hold for all $\lambda$ 
however the mean value of this integral is~$1$.
A family of measures $\mu_x$ is called ergodic if there exist a scaling function
$\alpha_x$ and a limit shape $\varphi$, $\int_0^\infty \varphi(t)\,dt=1$, 
such that for any $\eps>0$ and 
$(t_1,\dots,t_\ell)$ a set of continuity points of $\varphi$
\[
\lim_{x\nearrow\rad}\mu_x\bigl\{\lambda\colon
  \bigl|\tphi_{x;\lambda}(t_j)-\varphi(t_j)\bigr|<\eps \text{ for all }
  j=1,\dots,\ell\bigr\}=1\,.
\]
Ergodicity of $\mu^{(n)}$ and $\mu_x$ are closely related however 
not equivalent. The subject of this note is to establish conditions for
ergodicity of the first family but we shall investigate properties of the 
second one as well.  We start with a simple criterion for the case 
when measures $\mu_x$ can not be ergodic. 

\begin{proposition}\label{prop:nonerg}
If the partition function $F$ has an isolated pole in the point $\rad$ 
then measures $\mu_x$ are not ergodic.
\end{proposition}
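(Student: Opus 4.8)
The plan is to show that an isolated pole at $\rad$ forces the fluctuations of $N$ to be of the same order as $\BE_x N$ itself, which prevents the normalization $\alpha_x/\BE_x N$ from producing a deterministic limit. First I would extract the behavior of $F$ near the pole: if $\rad$ is an isolated pole of order $m\ge1$, then $F(x)=c(\rad-x)^{-m}(1+o(1))$ as $x\nearrow\rad$ for some $c>0$. Feeding this into \eqref{eq:exNm} with $m=1,2$, one computes $\BE_x N = x F'(x)/F(x) \sim m\rad/(\rad-x)$ and $\Var_x N = (x\,d/dx)^2\log F(x)\sim m\rad^2/(\rad-x)^2$. Hence $\Var_x N/(\BE_x N)^2 \to 1/m$, a positive constant, so $N/\BE_x N$ does \emph{not} concentrate at $1$ under $\mu_x$.

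Next I would translate non-concentration of $N$ into non-ergodicity. Observe that $\varphi_\lambda(0^+)=\sum_{k\ge1}R_k(\lambda)$ counts the number of parts, but more usefully, integrating the scaled diagram gives
\[
\int_0^\infty \tphi_{x;\lambda}(t)\,dt
=\frac{\alpha_x}{\BE_x N}\int_0^\infty\varphi_\lambda(\alpha_x t)\,dt
=\frac{1}{\BE_x N}\int_0^\infty\varphi_\lambda(u)\,du
=\frac{N(\lambda)}{\BE_x N}\,,
\]
using $\int_0^\infty\varphi_\lambda(u)\,du=\sum_k k R_k(\lambda)=N(\lambda)$. If the family $\mu_x$ were ergodic with limit shape $\varphi$ and scaling $\alpha_x$, then for each fixed continuity point $t_j$ the value $\tphi_{x;\lambda}(t_j)\to\varphi(t_j)$ in probability. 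Since the $\tphi_{x;\lambda}$ are uniformly bounded by $\tphi_{x;\lambda}(0^+)$ and nonincreasing, one expects (via a monotonicity/dominated-convergence argument on a fine grid of continuity points) that the integral $\int_0^\infty\tphi_{x;\lambda}(t)\,dt$ converges in probability to $\int_0^\infty\varphi(t)\,dt=1$. But we just showed this integral equals $N(\lambda)/\BE_x N$, which does not converge to $1$ in probability — contradiction.

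The main obstacle is the step passing from pointwise convergence of $\tphi_{x;\lambda}(t_j)$ at continuity points to convergence of the integral: a priori the mass of $\tphi_{x;\lambda}$ could escape to $0$ or to $\infty$ where the limit shape need not control it. To handle the tail near $t=0$ one uses that $\tphi_{x;\lambda}$ is nonincreasing, so its integral over $[0,\delta]$ is at most $\delta\,\tphi_{x;\lambda}(0^+)$; one must argue $\tphi_{x;\lambda}(0^+)=\frac{\alpha_x}{\BE_x N}\sum_k R_k(\lambda)$ is stochastically bounded (it has bounded mean, since $\BE_x\sum_k R_k = \sum_k x^k h_k(x^k)$ and this is $O(\BE_x N/\alpha_x)$ under any reasonable scaling). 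For the tail near $t=\infty$, monotonicity gives $\int_T^\infty\tphi_{x;\lambda}(t)\,dt\le \frac1{T}\int_0^\infty t\,\tphi_{x;\lambda}(t)\,dt$, and the latter has mean comparable to a constant by a second-moment estimate. Once these two tails are controlled uniformly in $x$, Fatou/dominated convergence on the compact middle range finishes the argument. Alternatively, and perhaps more cleanly, one can bypass the integral entirely: ergodicity would in particular force $\tphi_{x;\lambda}(t_1)$ to concentrate, and combining the lower bound $N(\lambda)\ge \alpha_x t_1\cdot \varphi_\lambda(\alpha_x t_1)$-type inequalities at several points with a matching upper bound shows $N(\lambda)/\BE_x N$ would have to concentrate, which it does not — so I would present whichever of these two routes requires the fewest auxiliary estimates.
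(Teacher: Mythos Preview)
Your high-level strategy coincides with the paper's: extract from the Laurent expansion of $F$ at $\rad$ that $\BE_xN\sim m\rad/(\rad-x)$ and $\Var_xN\sim m\rad^2/(\rad-x)^2$, conclude that $N/\BE_xN$ does not concentrate at $1$, and contradict ergodicity via the identity $\int_0^\infty\tphi_{x;\lambda}(t)\,dt=N(\lambda)/\BE_xN$. You have also correctly located the real difficulty, namely the passage from convergence at finitely many points to convergence of the integral.

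The gap is in your control of the tail near $t=0$. You claim that $\tphi_{x;\lambda}(0^+)=\tfrac{\alpha_x}{\BE_xN}\sum_kR_k(\lambda)$ has bounded mean ``under any reasonable scaling'', but in the proof by contradiction you have no information about $\alpha_x$ beyond its bare existence, and the definition of ergodicity permits $\varphi(0^+)=\infty$; nothing prevents $\BE_x\tphi_x(0^+)\to\infty$, in which case the crude bound $\int_0^\delta\tphi_{x;\lambda}\le\delta\,\tphi_{x;\lambda}(0^+)$ is useless. More fundamentally, pointwise convergence on $(0,\infty)$ does not rule out extra random mass of $\tphi_{x;\lambda}$ piling up near $0$ that the limit shape never sees. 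The same escape-of-mass issue would afflict the ``matching upper bound'' in your alternative route. (Note too that the Proposition is stated for \emph{arbitrary} multiplicative families --- see the Remark following its proof --- so you cannot invoke structural relations between $\sum_kR_k$ and $N$ that are specific to the family~\eqref{eq:ourF}.)

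The paper's resolution is to work one-sided, which discards both tails for free. From $\BE_x(N/\BE_xN)=1$ with variance bounded below one extracts $\mu_x\{N/\BE_xN<1-\tau\}>c$ for fixed $\tau,c>0$ and all $x$ near $\rad$. Now only a \emph{lower} bound on $N$ is needed. Pick $\eps>0$ and $T$ so that the area of $\varphi$ lying below height $\eps$ or above height $\varphi(\eps)$ is at most $\tau/3$; build a piecewise-constant minorant $\varphi^*\le\varphi-\delta$ on $[\eps,T]$ from finitely many continuity points. Ergodicity gives $\mu_x\{\tphi_{x;\lambda}>\varphi^*\text{ on }[\eps,T]\}\to1$, and on this event
\[
N(\lambda)\;\ge\!\sum_{\eps\alpha_x\le k\le T\alpha_x}\!kR_k(\lambda)
\;\ge\;\BE_xN\int_0^T(\varphi^*(t)-\eps)\,dt\;>\;(1-2\tau/3)\,\BE_xN
\]
for $\delta,\eps$ small enough. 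The contributions from $k<\eps\alpha_x$ and $k>T\alpha_x$ are nonnegative and simply dropped --- no tail control whatsoever. This is exactly your ``lower-bound-at-several-points'' alternative, once you realise that the matching upper bound is unnecessary.
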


\begin{proof}
Suppose that $F$ has an isolated pole of order $m\ge1$ at $\rad$.  Then in some 
neighborhood of $\rad$ it can be decomposed into the Laurent series
\begin{equation}\label{eq:laurentF}
F(x)=\sum_{j=-m}^\infty c_j(x-\rad)^j\,,\qquad\qquad c_{-m}\ne 0.
\end{equation}
Using formula \eqref{eq:exNm} we see that as $x\nearrow\rad$ 
\begin{gather} \label{eq:laurentexN}
\BE_x N= \frac{-x mc_{-m}(x-\rad)^{-m-1}+\dots}{c_{-m}(x-\rad)^{-m}+\dots}
\sim\frac{m\rad}{\rad-x} ,
\\ \notag
\BE_x N^2=\frac{m(m+1)xc_{-m}(x-\rad)^{-m-2}+\dots}{c_{-m}(x-\rad)^{-m}+\dots}
\sim\frac{m(m+1)\rad^2}{(\rad-x)^2}
\end{gather}
where dots denote lower order terms.
Hence the variance of $N/\BE_xN$ is bounded away from zero.  Consequently 
there exists $\tau>0$ and $c>0$ such that $\mu_x\{\lambda:
|N(\lambda)/\BE_xN-1|>\tau\}>c$ for all $x$ close to $\rad$. 
  Moreover, since the mean of $N/\BE_xN$ is one,
the one-sided inequality should also take place with positive probability:
$\mu_x\{\lambda: N(\lambda)/\BE_xN<1-\tau\}>c$.

Suppose that measures $\mu_x$ are ergodic with scaling $a_x$ and limit shape
$\varphi$.  Recall that $\varphi$ is a weakly
decreasing piecewise continuous function with unit integral. 
Take the $\varphi$-continuity point $\eps>0$ small enough so that 
\[
\int_0^\eps (\varphi(t)-\varphi(\eps))dt+\int_0^\infty \min\{\eps,\varphi(t)\}dt
<\tau/3.
\]
Geometrically it means that the area of the limit shape $\varphi$ lying lower
than $\eps$ or higher than $\varphi(\eps)$ is less than $\tau/3$.
Let $T=\inf\{t:\varphi(t)<\eps\}$.
Given $\eps$ and $\delta\in(0,\eps)$ define 
a finite collection of points recursively 
by the following procedure:
take $t_0=\eps$ and let $t_i=\inf\{t:\varphi(t)<\varphi(t_{i-1}+0)-\delta\}$
until on some step $t_d\ge T$. (Here $\varphi(t+0)$ is the right limit at $t$.)  
The procedure ends in final number of steps 
since $\varphi(t_{i}+0)\le \varphi(t_{i-1})-\delta$. Define now a function
$\varphi^*:[0,T]\to \BR$  to be equal $\varphi(\eps)-2\delta$ on $[0,t_1]$
and for all $i=2,\dots d$ let $\varphi^*(t)=\varphi(t_{i-1}+0)-2\delta$ on 
$(t_{i-1},t_i]$.  Thus $\varphi^*$ is a piecewise constant function with
discontinuities at $\{t_i\}$ and by construction it satisfies $\varphi^*(t)\le 
\varphi(t)-\delta$ for all $t\in[\eps,T]$.  Consequently ergodicity implies that
$\mu_x\{\lambda:\tphi_{x;\lambda}(t)>\varphi^*(t),t\in[\eps,T]\}\to 1$ as $x\to\rad$.  For all such $\lambda$ 
\[
N(\lambda)=\sum_{k=1}^\infty kR_k(\lambda)
\ge \sum_{k=\intfloor{\eps\alpha_x}}^{\intfloor{T\alpha_x}}kR_k(\lambda)
\ge \BE_xN\int_0^T(\varphi^*(t)-\eps)dt.
\]
Taking $\delta>0$ and $\eps>0$ small enough the last integral can be made
greater than $1-2\tau/3$, thus providing the contradiction which finishes
the proof.
\end{proof}

\begin{remark}
Note that the proof does not use the specific form \eqref{eq:ourF} of 
decomposition~\eqref{eq:Fasprod} and thus the result 
holds for any multiplicative measure. 
\end{remark}

\section{Ergodicity in the grand canonical ensemble}\label{sec:erglce}

Independence of $R_k$ in the grand canonical ensemble allows establishing
sufficient conditions for ergodicity in the grand canonical ensemble.
We start with finding asymptotics of the mean value of $N$ with respect to
measure $\mu_x$.

\begin{lemma}\label{lem:ExN}
Let measure $\mu_x$ be defined by decomposition~\eqref{eq:ourF}
and $b_k$ satisfy~\eqref{eq:Bk}.
If $\rad_1<1$ then $\BE_x N\sim \rad_1f'(x)/f(x)$ as $x\nearrow \rad_1$. 
If $\rad_1\ge 1$ $($if $\rad_1=1$ in addition $f$ has a pole
in $1)$
then as $x\nearrow 1$
\begin{equation}\label{eq:ExN}
\BE_xN\sim \Omega\frac{\ell(1/(1-x))}{(1-x)^{\beta+1}}\,,
\qquad\Omega=\int_0^1 
%\left((|\log u|-1)h(u)+u|\log u|h'(u)\right)|\log u|^\beta du\,
   \left(|\log u|^{\beta+1}(h(u)+uh'(u))-|\log u|^\beta h(u)\right) du\,
\end{equation}
where $h(u)=f'(u)/f(u)$ is the logarithmic derivative of\/ $f$,  
and $\ell$, $\beta$ are defined in~\eqref{eq:Bk}.
\end{lemma}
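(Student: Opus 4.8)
The plan is to work from the series representation of $\BE_x N$. By~\eqref{eq:EN-sum}, together with $f_k=f^{b_k}$ (so the logarithmic derivative is $h_k=f_k'/f_k=b_k h$, $h=f'/f$),
\[
\BE_x N=\sum_{k=1}^\infty k b_k x^k h(x^k).
\]
Nonnegativity of the Taylor coefficients of $f$ makes $h\ge0$ and $h$ nondecreasing on $[0,\rad_1)$; with $b_k\le B_k=k^\beta\ell(k)$ this shows the series converges for $x<\rad$. When $\rad_1<1$ we have $\rad=\rad_1$ by Proposition~\ref{prop:radius}, and for $k\ge2$ the argument $x^k$ stays below $x^2\to\rad_1^2<\rad_1$; hence $\sum_{k\ge2}kb_kx^kh(x^k)\le h(\rad_1^2)\sum_{k\ge2}kB_k\rad_1^k<\infty$ and has a finite limit as $x\nearrow\rad_1$ by monotone convergence. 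So $\BE_x N=xh(x)+O(1)$, and since the singularity at $\rad_1$ is a pole, $h(x)\to\infty$ and $\BE_x N\sim xh(x)\sim\rad_1 h(x)=\rad_1 f'(x)/f(x)$.

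The substance is the case $\rad_1\ge1$, where $\rad=1$. Set $\delta=-\log x$, so $\delta\sim1-x$ as $x\nearrow1$, and let $c(t)=t x^{t}h(x^{t})=t e^{-\delta t}h(e^{-\delta t})$. Abel summation (using $B_0=0$ and the exponential decay $B_Nc(N)\to0$) gives $\BE_x N=\sum_{k\ge1}B_k\bigl(c(k)-c(k+1)\bigr)=\int_1^\infty B_{\lfloor t\rfloor}\bigl(-c'(t)\bigr)\,dt$. Replacing $B_{\lfloor t\rfloor}$ by the smooth regularly varying $t^\beta\ell(t)$ and extending the lower limit to $0$ (the contributions of both changes being of smaller order), then substituting $t=u/\delta$, yields
\[
\BE_x N\sim\frac{1}{\delta^{\beta+1}}\int_0^\infty u^\beta\ell(u/\delta)\Bigl(-e^{-u}h(e^{-u})+ue^{-u}h(e^{-u})+ue^{-2u}h'(e^{-u})\Bigr)\,du.
\]
By the uniform convergence theorem for slowly varying functions one may replace $\ell(u/\delta)$ by $\ell(1/\delta)$ inside the integral; the substitution $v=e^{-u}$ then turns the remaining integral into $\int_0^1\bigl(|\log v|^{\beta+1}(h(v)+vh'(v))-|\log v|^\beta h(v)\bigr)\,dv=\Omega$. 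Finally $\ell(1/\delta)\sim\ell(1/(1-x))$ and $\delta^{\beta+1}\sim(1-x)^{\beta+1}$, which is~\eqref{eq:ExN}.

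The main obstacle is to justify rigorously the two approximation steps above. For the passage from the sum $\sum_kB_k(c(k)-c(k+1))$ to the integral $\int t^\beta\ell(t)(-c'(t))\,dt$ one needs an Euler--Maclaurin estimate with the error controlled uniformly over the bulk region $t\asymp1/\delta$, using $B_{\lfloor t\rfloor}/(t^\beta\ell(t))\to1$; for the replacement of $\ell(u/\delta)$ by $\ell(1/\delta)$ one needs Potter's bounds $\ell(u/\delta)\le C_\varepsilon\ell(1/\delta)\max(u^\varepsilon,u^{-\varepsilon})$ to dominate the integrand and apply dominated convergence. Both steps are most delicate near $u=0$ (equivalently $v=1$, $t$ bounded), where $h$ has its singularity: when $\rad_1=1$ the pole hypothesis gives $h(e^{-u})=O(1/u)$, which together with $\beta>0$ keeps all the integrands integrable at $0$; when $\rad_1>1$, $h$ is bounded there and no such care is needed. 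A slicker packaging of the same facts is to write $\BE_x N=\int_{0+}^\infty e^{-\delta t}h(e^{-\delta t})\,dD(t)$ with $D(t)=\sum_{j\le t}jb_j$, note that $D_k=kB_k-\sum_{j<k}B_j\sim\frac{\beta}{\beta+1}k^{\beta+1}\ell(k)$ is regularly varying and that $u\mapsto e^{-u}h(e^{-u})$ is nonincreasing, and apply the generalized Abelian theorem for Laplace--Stieltjes transforms (valid since this weight is monotone and $\int_0^\infty u^\beta e^{-u}h(e^{-u})\,du<\infty$) from \cite[Ch.~1]{BGT}; one integration by parts, via $\int_0^1|\log v|^{\beta+1}(vh(v))'\,dv=(\beta+1)\int_0^1|\log v|^\beta h(v)\,dv$, then recasts the resulting constant $\beta\int_0^1|\log v|^\beta h(v)\,dv$ in the form $\Omega$ of the statement.
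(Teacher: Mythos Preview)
Your main approach is essentially the paper's: write $\BE_xN=\sum_k kb_kx^kh(x^k)$, perform Abel summation to bring in $B_k$, and then use the regular variation of $B_k$ to identify the limit. The paper handles the last step by truncating at $m=[T/(1-x)]$, invoking the uniform convergence theorem $B_{[t/(1-x)]}/B_{[1/(1-x)]}\to t^\beta$ on compacta, and reading off Riemann sums before letting $T\to\infty$; you instead sketch a dominated-convergence argument with Potter bounds. These are interchangeable tactics for the same step, and you correctly flag the only delicate point (the behaviour near $u=0$ when $\rad_1=1$, where the pole gives $h(e^{-u})=O(1/u)$ and $\beta>0$ saves integrability). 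Your treatment of the case $\rad_1<1$ is likewise the same as the paper's; note only that the conclusion $h(x)\to\infty$ is used rather than proved in both places, relying on the standing hypothesis that $f$ has a nonremovable singularity (pole) at~$\rad_1$.

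Your ``slicker packaging'' is a genuinely different and cleaner route. Recognizing $\BE_xN=\int_{0+}^\infty\phi(\delta t)\,dD(t)$ with $\phi(u)=e^{-u}h(e^{-u})$ and $D_k=\sum_{j\le k}jb_j\sim\tfrac{\beta}{\beta+1}k^{\beta+1}\ell(k)$ regularly varying lets you invoke a one-line Abelian theorem; the monotonicity of $\phi$ (equivalently of $v\mapsto vh(v)$, which holds since $vf'(v)/f(v)$ is the mean of the tilted distribution on the Taylor coefficients) is exactly the hypothesis such theorems need. This bypasses the mean-value expansion and the Riemann-sum bookkeeping in the paper, and it produces the constant directly as $\beta\int_0^1|\log v|^\beta h(v)\,dv$, which your integration-by-parts identity correctly matches to $\Omega$. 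One small correction: the relevant Abelian theorems in \cite{BGT} are in \S1.7 and Chapter~4 rather than ``Chapter~1'' in general; cite the specific result you mean.
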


\begin{proof}
If $F$ satisfies~\eqref{eq:ourF} then $h_k(u)=b_kh(u)$ in~\eqref{eq:EN-sum} 
hence it can be rewritten as
\[
\BE_xN=\sum_{k=1}^\infty kb_kx^kh(x^k)\,.
\]
If $\rad_1<1$ then the only summand above which goes to infinity as 
$x\to\rad_1$ is the first one, and the sum of all remaining summands is
dominated by the convergent series 
$h(\rad_1^2)\sum_{k=2}^\infty kb_k \rad_1^k$, so
the statement holds. 

Suppose $\rad_1\ge1$. Taking partial sum and using summation by parts yields
\begin{align*}
\sum_{k=1}^m kb_kx^kh(x^k) &{}= 
(m+1)B_mx^{m+1}h(x^{m+1})%\\
%&\qquad 
+\sum_{k=1}^m B_k\left(kx^kh(x^k)
   -(k+1)x^{k+1}h(x^{k+1})
 \right)\,.
\end{align*}
The first summand vanishes as $m\to\infty$ for fixed $x$, so taking
$m$ large enough it can be hold bounded.  Since $h$ is an analytic 
function in some disk including points $x^{k+1}$ 
and $x^k$ the mean value theorem allows to conclude that
\begin{multline*}
kx^kh(x^k)-(k+1)x^{k+1}h(x^{k+1})
=kx^kh(x^k)-(k+1)x^{k+1}\left(h(x^k)+h'(x^\varkappa)(x^{k+1}-x^k)\right)\\
=-x^{k+1}h(x^k)+k(1-x)x^kh(x^k)+(1-x)(k+1)x^{2k+1}h'(x^\varkappa)
\end{multline*}
where $\varkappa\in[k,k+1]$.
Combining the above formulas we obtain
\begin{align*}
\sum_{k=1}^m kb_kx^kh(x^k)
&{}=(m+1)B_mx^{m+1}h(x^{m+1})-\sum_{k=1}^m B_kx^{k+1}h(x^k)
\\
&\qquad+(1-x)\smash[t]{\sum_{k=1}^m }
    B_k\left(kh(x^k)x^k+(k+1)h'(x^\varkappa)x^{2k+1}\right)\,.
\end{align*}
Take $T>0$ and put $m=[T/(1-x)]$.
Since $B_{[t/(1-x)]}/B_{[1/(1-x)]}\to t^\beta$ uniformly in $t\in(0,T]$ (see
\cite[Th.~1.5.2]{BGT}), after multiplication by 
$(1-x)^{1+\beta}/\ell(1/(1-x))$ the above sums become correspondingly
the Riemann sums for the convergent integrals 
$\int_0^T t^\beta h(e^{-t})e^{-t}dt$ and
$\int_0^T t^{\beta+1}\left(h(e^{-t})e^{-t}+h'(e^{-t})e^{-2t}\right)dt$.\footnote{The 
integrals are convergent even if $\rad_1=1$: since $f$ has a pole at~1,
$h(e^{-t})$ has a simple pole and $h'(e^{-t})$ has a pole of order 2 at~0, 
so multiplication
by $t^\beta$ and $t^{\beta+1}$ kills both singularities.  To be completely 
rigorous, one should take integral from $1/T$ to $T$ in order to speak about 
the Riemann sum, and then use uniformness in $T$ to exchange limits.}
Letting $T\to\infty$ and changing variable $u=e^{-t}$  finish the proof.
\end{proof}

Proposition~\ref{prop:nonerg} shows that ergodicity in the grand canonical
ensemble can not take place for the case $\rad_1<1$ if $f$ has a pole
in this point.  Indeed,  $\sum_{k\ge2} \BE_xR_k$ is bounded 
as $x\nearrow \rad$ and $\BE_xN\sim\BE_xR_1$
is not, so one can take scaling $\alpha_x\equiv1$ to get a limit
of scaled Young diagrams $\varphi_x(t)=1_{[0,1)}(t)R$ with $R$ a (nondegenerate)
limit of $R_1/\BE_x R_1$.  Thus in this case ``almost all'' partitions 
consist mostly of ones, and all larger parts constitute a vanishing fraction of
the whole sum.  Further questions can be asked about the distribution of larger 
parts etc.\ however they are beyond the scope of this note.

\smallskip

Suppose $\rad_1\ge1$.
The change of variables $k\leftrightarrow t/(1-x)$ made implicitly in the
proof of Lemma~\ref{lem:ExN} suggests the choice of the
 scaling function $\alpha_x=1/(1-x)$.
The following lemma states that the mean is not degenerate with this scaling.

\begin{lemma}\label{lem:exphit} 
Suppose that $\rad_1\ge1$ and for $\rad_1=1$ assume additionally
that the singularity of  $f$ in~1 is an isolated pole.
For the scaling function $\alpha_x=1/(1-x)$ the mean value of a scaled 
random Young diagram at point $t>0$ is
\begin{equation}\label{eq:exphit}
\varphi(t):=\lim_{x\nearrow1}\BE_x \tphi_x(t)= \frac{1}{\Omega}
\left(\int_0^{e^{-t}}\left(h(u)+uh'(u)\right)|\log u|^\beta du
    -t^\beta h(e^{-t})e^{-t}\right)\,
\end{equation}
where $h(u)=f'(u)/f(u)$ is the logarithmic derivative of $f$, $\Omega$ is 
defined in~\eqref{eq:ExN} and $\ell$, $\beta$ are defined in~\eqref{eq:Bk}.
If $\rad_1=1$ and $\beta\in(0,1]$ then $\varphi(0)=\infty$, otherwise 
it is finite and the convergence takes place also for $t=0$.
\end{lemma}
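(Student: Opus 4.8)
The plan is to compute $\BE_x\tphi_x(t)$ in closed form and reduce it, via Lemma~\ref{lem:ExN}, to the same type of sum treated there. Since $f_k=f^{b_k}$ gives $h_k=b_kh$, the formula for $\BE_xR_k$ displayed just before \eqref{eq:EN-sum} becomes $\BE_xR_k=x^kh_k(x^k)=b_kx^kh(x^k)$, so
\[
\BE_x\tphi_x(t)=\frac{\alpha_x}{\BE_xN}\sum_{k>\alpha_xt}\BE_xR_k=\frac{\alpha_x}{\BE_xN}\sum_{k>\alpha_xt}b_kx^kh(x^k).
\]
Taking $\alpha_x=1/(1-x)$ and inserting $\BE_xN\sim\Omega\,\ell(1/(1-x))/(1-x)^{\beta+1}$ from Lemma~\ref{lem:ExN}, the prefactor is asymptotic to $(1-x)^{\beta}/\bigl(\Omega\,\ell(1/(1-x))\bigr)$, so the whole problem comes down to the asymptotics of the truncated sum $S_M(x):=\sum_{k>M}b_kx^kh(x^k)$ with $M=\intfloor{t/(1-x)}$.

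For this I would repeat the summation by parts from the proof of Lemma~\ref{lem:ExN}, now without the extra factor $k$. Writing $b_k=B_k-B_{k-1}$, using the mean value theorem to get $a_k-a_{k+1}=(1-x)\bigl(x^kh(x^k)+x^{2k+1}h'(x^\varkappa)\bigr)$ for $a_k=x^kh(x^k)$ with some $\varkappa\in[k,k+1]$, and noting $B_mx^{m+1}h(x^{m+1})\to0$ as $m\to\infty$ for fixed $x<1$, one obtains
\[
S_M(x)=-B_Mx^{M+1}h(x^{M+1})+(1-x)\sum_{k>M}B_k\bigl(x^kh(x^k)+x^{2k+1}h'(x^\varkappa)\bigr).
\]
For $M=\intfloor{t/(1-x)}$ one has $B_M/B_{\intfloor{1/(1-x)}}\to t^{\beta}$ and $x^{M+1}\to e^{-t}$ as $x\nearrow1$, with $h$ continuous at $e^{-t}<\rad_1$, so after the rescaling the boundary term contributes $-t^{\beta}e^{-t}h(e^{-t})/\Omega$, the second term of \eqref{eq:exphit}. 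Setting $s=k(1-x)$ and using $B_{\intfloor{s/(1-x)}}/B_{\intfloor{1/(1-x)}}\to s^{\beta}$ uniformly on compacts (\cite[Th.~1.5.2]{BGT}) together with $x^{s/(1-x)}\to e^{-s}$, the remaining sum is a Riemann sum of mesh $1-x$ converging, after the rescaling, to $\frac1\Omega\int_t^{\infty}s^{\beta}\bigl(e^{-s}h(e^{-s})+e^{-2s}h'(e^{-s})\bigr)\,ds$; the substitution $u=e^{-s}$ turns this into $\frac1\Omega\int_0^{e^{-t}}\bigl(h(u)+uh'(u)\bigr)|\log u|^{\beta}\,du$, the first term of \eqref{eq:exphit}. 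As in Lemma~\ref{lem:ExN} the interchange of the upper cut-off limit $T\to\infty$ with $x\to1$ must be justified by uniformity in $T$, and when $\rad_1=1$ the lower cut-off has to be kept away from $0$ since $h,h'$ blow up there; for $t>0$ the integrand is integrable on $(t,\infty)$, so no difficulty arises.

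It remains to treat $t=0$. On $(0,1]$ the density $\bigl(h(u)+uh'(u)\bigr)|\log u|^{\beta}$ can be singular only at $u=1$, and only when $\rad_1=1$: if $f$ has there a pole of order $p$ then $h(u)+uh'(u)=p(1-u)^{-2}+O(1)$ while $|\log u|^{\beta}=(1-u)^{\beta}(1+O(1-u))$, so the density is $\sim p(1-u)^{\beta-2}$, integrable near $1$ precisely when $\beta>1$ (it is also integrable if $\rad_1>1$, where $h,h'$ are bounded near $1$). When it is integrable, $t^{\beta}e^{-t}h(e^{-t})\to0$ as $t\searrow0$ and \eqref{eq:exphit} extends continuously to $t=0$ with $\varphi(0)=\frac1\Omega\int_0^1\bigl(h(u)+uh'(u)\bigr)|\log u|^{\beta}\,du<\infty$; to obtain $\BE_x\tphi_x(0)\to\varphi(0)$ I would rerun the computation above with $M=0$, splitting the sum at $k=\intfloor{\eps/(1-x)}$, so that the tail converges to $\frac1\Omega\int_{\eps}^{\infty}s^{\beta}(\cdots)\,ds\to\varphi(0)$ as $\eps\to0$ while the head $\frac{\alpha_x}{\BE_xN}\sum_{k\le\intfloor{\eps/(1-x)}}b_kx^kh(x^k)$ is $O(\eps^{\beta-1})$ uniformly in $x$ (here $\beta>1$ enters via $\sum_{k\le K}k^{\beta-2}\asymp K^{\beta-1}$), hence negligible. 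If $\rad_1=1$ and $\beta\in(0,1]$ the integral defining $\varphi(0)$ diverges, so $\varphi(0)=\infty$; moreover $\varphi(t)\to+\infty$ as $t\searrow0$ — for $\beta<1$ the first term of \eqref{eq:exphit} is $\sim\frac{p}{(1-\beta)\Omega}\,t^{\beta-1}$ against a subtracted term $\sim\frac{p}{\Omega}\,t^{\beta-1}$, leaving $\sim\frac{p\beta}{(1-\beta)\Omega}\,t^{\beta-1}$, and for $\beta=1$ the first term is $\sim-\frac{p}{\Omega}\log t$ — so that $\BE_x\tphi_x(0)\ge\BE_x\tphi_x(t)\to\varphi(t)\to\infty$.

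I expect the only genuine obstacle to be that last uniform estimate — the $O(\eps^{\beta-1})$ bound on the head in the $t=0$, $\beta>1$ case: there each individual expectation $\BE_xR_k=b_kx^kh(x^k)$ diverges as $x\to1$ when $\rad_1=1$ (since $h(x^k)\to h(1)=\infty$), so the needed cancellation has to be extracted from the summation by parts together with the precise regular variation of the $B_k$. Everything else is the routine Riemann-sum bookkeeping already carried out for Lemma~\ref{lem:ExN}.
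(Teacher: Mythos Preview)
Your approach is essentially the paper's own: Abel summation on $\sum_{k>M}b_kx^kh(x^k)$, mean value theorem for the increment $x^kh(x^k)-x^{k+1}h(x^{k+1})$, and a Riemann-sum passage using the uniform convergence $B_{\intfloor{s/(1-x)}}/B_{\intfloor{1/(1-x)}}\to s^\beta$, followed by the substitution $u=e^{-s}$ and division by the asymptotics of $\BE_xN$ from Lemma~\ref{lem:ExN}. The paper phrases it with two cutoffs $m_1=t/(1-x)$, $m_2=T/(1-x)$ and lets $T\to\infty$ at the end, while you send the upper end to infinity first and then truncate; this is cosmetic. Your treatment of the $t=0$ case --- the integrability analysis of $(h(u)+uh'(u))|\log u|^\beta$ near $u=1$, the head/tail split at $\intfloor{\eps/(1-x)}$, and the explicit $t\searrow0$ blow-up rates for $\beta<1$ and $\beta=1$ --- is in fact more detailed than the paper, which merely remarks that the integral converges for $t=0$ when $\beta>1$ and leaves the rest implicit.
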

\begin{proof}
The proof is similar to that of Lemma~\ref{lem:ExN} so we present only a sketch.
The partial sum is
\begin{equation*}
\sum_{k=m_1}^{m_2}\BE_x R_k=B_{m_2}x^{m_2+1}h(x^{m_2+1})
-B_{m_1-1}x^{m_1}h(x^{m_1})
+\sum_{k=m_1}^{m_2}B_k\left(x^kh(x^k)-x^{k+1}h(x^{k+1})\right)\,.
\end{equation*}
Expression in brackets under the summation sign can be represented as
\[
x^kh(x^k)-x^{k+1}h(x^{k+1})=(1-x)(h(x^k)x^k+h'(x^k)x^{2k+1})
  +\tfrac{1}{2}(1-x)^2h''(x^\varkappa)x^{3k+1}
\]
for $\varkappa\in[k,k+1]$,
so taking $m_1=t/(1-x)$, $m_2=T/(1-x)$ and representing sums by integrals
yields 
\begin{align*}
\sum_{k=m_1}^{m_2}\BE_xR_k\sim \frac{\ell(1/(1-x))}{(1-x)^\beta}\biggl({}&{}
T^\beta e^{-T}h(e^{-T})-t^\beta e^{-t}h(e^{-t})\\
&\qquad+\smash[t]{\int_t^T}
  v^\beta\left(h(e^{-v})+h'(e^{-v})e^{-v}\right)e^{-v}dv\smash[t]{\biggr)}
\end{align*}
for $t>0$. If $\beta>1$ then the integral converges also for
$t=0$. To finish the proof it remains to change variable,
divide by the asymptotic expression~\eqref{eq:ExN} for $\BE_xN$  
and let $T\to\infty$.
\end{proof}

The function $\varphi$ defined by~\eqref{eq:exphit} is a natural candidate 
for the limit shape. To show that the
definition of ergodicity really holds we give a bound for probability of
deviation at a fixed point.

\begin{lemma}\label{lem:muxdev}
Suppose that $\rad_1\ge1$, $f$ has a pole in~1 if $\rad_1=1$, 
and fix $t,\eps>0$.
Then, for $x$ close to~1
\begin{equation*}
\mu_x\bigl\{\lambda\colon
  \bigl|\tphi_{x;\lambda}(t)-\varphi(t)\bigr|>\eps\bigr\}\le e^{-(1-x)^{-\beta/2}}\,.
\end{equation*}
\end{lemma}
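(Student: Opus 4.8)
The plan is to exploit the independence of the counts $R_k$ under $\mu_x$ and to apply an exponential (Chernoff-type) tail bound to the sum $\varphi_\lambda(\alpha_x t)=\sum_{k>\alpha_x t}R_k(\lambda)$. Write $\alpha_x=1/(1-x)$ and $M=\intfloor{\alpha_x t}$, so that $\varphi_\lambda(\alpha_x t)=\sum_{k>M}R_k$. Under $\mu_x$ each $R_k$ has a generating function $\BE_x s^{R_k}=f(s x^k)^{b_k}/f(x^k)^{b_k}$, so for $\theta>0$ small enough (so that $e^\theta x^k<\rad_1$ for all $k>M$, which holds for $x$ close to $1$ once $\theta$ is fixed small)
\[
\BE_x \exp\Bigl(\theta\sum_{k>M}R_k\Bigr)=\prod_{k>M}\left(\frac{f(e^\theta x^k)}{f(x^k)}\right)^{b_k}.
\]
By Markov's inequality applied to $\exp(\pm\theta\sum_{k>M}R_k)$, the probability in the statement is at most
\[
\exp\!\Bigl(-\theta\,\BE_xN\,(\varphi(t)+\eps)\Bigr)\prod_{k>M}\left(\frac{f(e^\theta x^k)}{f(x^k)}\right)^{b_k}
+\exp\!\Bigl(\theta\,\BE_xN\,(\varphi(t)-\eps)\Bigr)\prod_{k>M}\left(\frac{f(e^{-\theta} x^k)}{f(x^k)}\right)^{b_k},
\]
after dividing the deviation event $\{|\alpha_x^{-1}\BE_xN\,\tphi_{x;\lambda}(t)-\alpha_x^{-1}\BE_xN\,\varphi(t)|>\alpha_x^{-1}\BE_xN\,\eps\}$ into its two one-sided parts.

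The key computation is to estimate $\log\prod_{k>M}(f(e^{\pm\theta}x^k)/f(x^k))^{b_k}=\sum_{k>M}b_k\log\bigl(f(e^{\pm\theta}x^k)/f(x^k)\bigr)$. Using $\log f(e^{\pm\theta}y)-\log f(y)=\pm\theta y h(y)+\tfrac{\theta^2}{2}(yh(y)+y^2h'(y))+O(\theta^3)$ uniformly for $y$ in compact subsets of $(0,\rad_1)$ (and, when $\rad_1=1$, controlled near $y=1$ by the pole assumption, exactly as in the footnote to Lemma~\ref{lem:ExN}), and then replacing the sum over $k>M$ by the corresponding Karamata-type integral via summation by parts — the same device used in the proofs of Lemmas~\ref{lem:ExN} and~\ref{lem:exphit} — one finds that
\[
\sum_{k>M}b_k\log\frac{f(e^{\pm\theta}x^k)}{f(x^k)}
=\pm\theta\,\BE_xN\,\varphi(t)\;+\;O\!\bigl(\theta^2\,\BE_xN\bigr)\qquad(x\nearrow1),
\]
where the leading term matches $\pm\theta\,\BE_xN\,\varphi(t)$ precisely because $\varphi(t)$ was defined in~\eqref{eq:exphit} as the limit of $\BE_x\tphi_x(t)=\alpha_x\sum_{k>\alpha_xt}\BE_xR_k/\BE_xN$ and $\BE_xR_k=b_kx^kh(x^k)+\dots$ produces the same integrand. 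Substituting into the two Markov bounds, the linear-in-$\theta$ terms cancel against $\pm\theta\BE_xN\varphi(t)$ and we are left with an estimate of the form
\[
\mu_x\{|\tphi_{x;\lambda}(t)-\varphi(t)|>\eps\}\le 2\exp\!\bigl(-\theta\eps\,\BE_xN+C\theta^2\BE_xN\bigr).
\]
Choosing $\theta=\theta(\eps)$ a small fixed constant (so $C\theta^2<\theta\eps/2$) gives a bound $2\exp(-c(\eps)\BE_xN)$, and since $\BE_xN\sim\Omega\,\ell(1/(1-x))/(1-x)^{\beta+1}$ by Lemma~\ref{lem:ExN}, which grows faster than $(1-x)^{-\beta/2}$ as $x\nearrow1$ (because $\beta+1>\beta/2$ and $\ell$ is regularly varying), the right-hand side is eventually smaller than $e^{-(1-x)^{-\beta/2}}$. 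This proves the claim.

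The main obstacle I expect is making the error term $O(\theta^2\BE_xN)$ in the cumulant expansion genuinely uniform in $x$ near $1$, especially in the boundary case $\rad_1=1$ where $h$ has a simple pole and $h'$ a double pole at the endpoint $u=1$ (equivalently at $t=0$ after the change of variables $u=e^{-t}$). One must check that the tail sum $\sum_{k>M}$ with $M=\intfloor{t\alpha_x}$, $t>0$ fixed, stays away from this singularity — indeed the relevant $y=x^k$ range over values $\le x^M\approx e^{-t}<1$, so the pole is never approached and the Taylor coefficients of $\log f$ at those points are bounded — and that the Karamata replacement of the sum by the integral introduces only lower-order corrections, which is precisely the content of \cite[Th.~1.5.2]{BGT} already invoked earlier. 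A secondary technical point is justifying $e^\theta x^k<\rad_1$ for all $k>M$ and $x$ close to $1$ when $\rad_1>1$: this is immediate since $x^k\le x^M\le x^{\,t/(2(1-x))}\to e^{-t/2}<1<\rad_1$, so a fixed small $\theta$ works. Neither issue is deep; they are the same estimates already deployed in Lemmas~\ref{lem:ExN} and~\ref{lem:exphit}, now carried one order further in the $\theta$-expansion.
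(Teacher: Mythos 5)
Your argument is essentially the same Chernoff-type bound the paper uses: tilt the independent sum $\sum_{k>M}R_k$ by a small exponential parameter, bound the cumulant by a quadratic in the tilt using the asymptotics already established in Lemmas~\ref{lem:ExN} and~\ref{lem:exphit}, and observe that the resulting exponent $\sim(1-x)^{-\beta}\ell(1/(1-x))$ beats $(1-x)^{-\beta/2}$; the paper's parametrization (a large $u$ applied to the normalized $\tphi_x(t)$, optimized so that the per-$R_k$ tilt $u\delta(x)$ is a fixed small multiple of $\eps$) is exactly your fixed $\theta$ in different coordinates. One bookkeeping slip to fix: since $\varphi_\lambda(\alpha_x t)=(\BE_xN/\alpha_x)\tphi_x(t)$, the Markov exponents and the leading cumulant term should carry $\alpha_x^{-1}\BE_xN=(1-x)\BE_xN\sim\Omega\ell(1/(1-x))(1-x)^{-\beta}$ rather than $\BE_xN$ itself (and the second-order term is correspondingly $O(\theta^2(1-x)\BE_xN)$), but this still dominates $(1-x)^{-\beta/2}$, so the conclusion stands.
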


\begin{proof}
For fixed $t>0$ and $k>t/(1-x)$ there exist exponential moments
$\BE_xe^{uR_k}=f(x^ke^u)^{b_k}/f(x^k)^{b_k}$ at least for 
$u\in[0,t+\log\rad_1)$.  Moreover, convexity of $f$ implies that 
exponential moments of $\sum_{k>t/(1-x)}R_k$  also exist for such $u$:
argument follows the lines of the proof of Proposition~\ref{prop:radius}.
Hence for fixed $\eps>0$, $u\in(0,t+\log\rad_1)$ and $x$ close enough to~1
\begin{multline}\label{eq:muxdevt1}
\mu_x\bigl\{\lambda\colon\tphi_{x;\lambda}(t)-\varphi(t)\ge\eps\bigr\}
=\mu_x\bigl\{\lambda\colon e^{u\tphi_{x;\lambda}(t)}\ge 
          e^{u(\varphi(t)+\eps)}\bigr\}
\le \frac{\BE_xe^{u\tphi_x(t)}}{e^{u(\varphi(t)+\eps)}}\\
= \frac{\BE_xe^{u(\tphi_x(t)-\BE_x\tphi_x(t))}}
         {e^{u(\varphi(t)-\BE_x\tphi_x(t)+\eps)}}
=e^{-u\eps/2}\prod_{k>t/(1-x)}\BE_x 
\exp\left(\frac{u(1-x)^\beta(1+o(1))}{\Omega\ell(1/(1-x))}(R_k-\BE_xR_k)\right)\\
\le e^{-u\eps/2}\prod_{k>t/(1-x)}\BE_x 
\exp\left(\frac{2u(1-x)^\beta}{\Omega\ell(1/(1-x))}(R_k-\BE_xR_k)\right)
\end{multline}
where we have used Markov's inequality, Lemmas~\ref{lem:ExN} 
and~\ref{lem:exphit} and independence of~$R_k$.  Denote for short 
$\delta=\delta(x)=\frac{2(1-x)^\beta}{\Omega\ell(1/(1-x))}$, note that $\delta(x)\to 0$
as $x\nearrow1$. Each factor in the right-hand side of~\eqref{eq:muxdevt1}
is defined at least for $u<(t+\log\rad_1)\delta^{-1}$, and the product
converges. Since
\[
\BE_x \exp\left(\frac{2u(1-x)^\beta}{\Omega\ell(1/(1-x))}(R_k-\BE_xR_k)\right)
=\exp b_k\left(\log\frac{f(x^ke^{u\delta(x)})}{f(x^k)}
    -u\delta(x)\frac{x^kf'(x^k)}{f(x^k)}\right)
\]
the logarithm of $k$'th factor in~\eqref{eq:muxdevt1}
divided by $b_k$ can be bounded as follows:
\begin{multline*}
 \log\frac{f(x^ke^{u\delta})}{f(x^k)}
    -u\delta\frac{x^kf'(x^k)}{f(x^k)}
\le \log\left(1+\frac{f(x^ke^{u\delta})-f(x^k)}{f(x^k)}\right)
  -u\delta\frac{x^kf'(x^k)}{f(x^k)}\\
\le \frac{f(x^ke^{u\delta})-f(x^k)}{f(x^k)}
  -u\delta\frac{x^kf'(x^k)}{f(x^k)}
\le \frac{x^k(e^{u\delta}-1)f'(x^k)}{f(x^k)}
  -u\delta\frac{x^kf'(x^k)}{f(x^k)}
\le (u\delta)^2\frac{x^kf'(x^k)}{f(x^k)}
\end{multline*}
for $u\le1/\delta$
since $f'(x)$ is nondecreasing function and $e^v-v-1\le v^2$ for $v\in[0,1]$.
Hence continuing \eqref{eq:muxdevt1} we obtain
\[
\mu_x\bigl\{\lambda\colon\tphi_{x;\lambda}(t)-\varphi(t)\ge\eps\bigr\}
\le \exp\left((u\delta)^2\left(\sum_{k>t/(1-x)}b_k\frac{x^kf'(x^k)}{f(x^k)}\right)
-u\eps/2\right)\,.
\]
We have already found the asymptotics of the sum above in the proof of
Lemma~\ref{lem:exphit}: it is asymptotically equivalent to 
$\frac{\ell(1/(1-x))}{(1-x)^\beta}\Omega\varphi(t)$.  Hence the upper bound
takes form $\exp(c_1\delta(x)u^2-u\eps/2)$ for some $c_1>0$. Now we can choose
$u$ such that it provides the best estimate. It is achieved at
$u=\eps/(4c_1\delta(x))$ which gives an upper bound $\exp(-\eps^2/(16 c_1\delta(x)))$ and at least for small $\eps$ all the calculations
 above are valid.  

The same upper bound for $\mu_x\left\{\lambda\colon\tphi_{x;\lambda}(t)
-\varphi(t)\le-\eps\right\}$ is obtained exactly in the same way.
An observation that 
$2\exp(-\eps^2/(16 c_1\delta(x)))<\exp(-(1-x)^{\beta/2})$ for $x$ close enough
to~1 finishes the proof..
\end{proof}

We combine the results about ergodicity in the grand canonical ensemble in
the next statement.

\begin{thm}\label{thm:lceerg}
Measures $\mu_x$ defined by~\eqref{eq:ourF} with $b_k$ satisfying \eqref{eq:Bk}
are ergodic if either the radius of convergence $\rad_1$ 
of function $f$ is greater
than~1 or if it is equal to~1 and $f$ has a pole at~1.
The possible choice of scaling function in this case is
$\alpha_x=1/(1-x)$ which leads to the limit shape $\varphi$ 
defined by~\eqref{eq:exphit}.

If $\rad_1<1$ and $f$ has a pole at $\rad_1$
 then ergodicity does not hold.  
\end{thm}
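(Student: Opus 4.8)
The plan is to assemble the positive statement from Lemmas~\ref{lem:ExN}, \ref{lem:exphit} and~\ref{lem:muxdev}, and to deduce the negative one from Proposition~\ref{prop:nonerg}. Throughout the positive part one works in the regime $x\nearrow\rad$, which by Proposition~\ref{prop:radius} means $x\nearrow1$, with the scaling $\alpha_x=1/(1-x)$; note that the hypotheses of all three lemmas (namely $\rad_1\ge1$, together with a pole of $f$ at~$1$ when $\rad_1=1$) are precisely those of the theorem, so no case split is needed. First I would check that the function $\varphi$ of~\eqref{eq:exphit} is an admissible limit shape. Lemma~\ref{lem:ExN} gives $\Omega\in(0,\infty)$, and Lemma~\ref{lem:exphit} exhibits $\varphi(t)$, for $t>0$, as the pointwise limit of $\BE_x\tphi_x(t)$; since each $t\mapsto\BE_x\tphi_x(t)$ is nonnegative and nonincreasing, so is $\varphi$, and since the right-hand side of~\eqref{eq:exphit} is a smooth function of $t$ on $(0,\infty)$ it is in particular piecewise continuous (the value $\varphi(0)=\infty$, possible when $\rad_1=1$ and $\beta\le1$, is irrelevant, ergodicity being tested only at positive points).

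The one real computation is the normalization $\int_0^\infty\varphi(t)\,dt=1$, which I would read off directly from~\eqref{eq:exphit} by Tonelli's theorem. Interchanging the order of integration in $\int_0^\infty\bigl(\int_0^{e^{-t}}(h(u)+uh'(u))|\log u|^\beta\,du\bigr)dt$ converts it into $\int_0^1(h(u)+uh'(u))|\log u|^{\beta+1}\,du$, while the substitution $u=e^{-t}$ turns $\int_0^\infty t^\beta h(e^{-t})e^{-t}\,dt$ into $\int_0^1|\log u|^\beta h(u)\,du$; subtracting and comparing with the definition of $\Omega$ in~\eqref{eq:ExN} gives $\Omega\int_0^\infty\varphi(t)\,dt=\Omega$. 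Both integrals on the right converge absolutely: near $u=1$ the pole of $f$ makes $h+uh'\asymp(1-u)^{-2}$ and $h\asymp(1-u)^{-1}$, each tamed by the corresponding power of $|\log u|\asymp(1-u)$ since $\beta>0$; and the integrands are nonnegative (recall $h\ge0$, and $u\bigl(h(u)+uh'(u)\bigr)=\Var_u R_1\ge0$, cf.~\eqref{eq:varN-sum}), so all interchanges are legitimate and $\int_0^\infty\varphi$ is well defined. (One could instead note $\int_0^\infty\BE_x\tphi_x(t)\,dt=\BE_x[N/\BE_x N]=1$ for every $x$ and pass to the limit, but that needs an extra tightness argument to exclude escape of mass, so I prefer the explicit route.)

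With $\varphi$ admissible, the positive part follows at once from Lemma~\ref{lem:muxdev}: given continuity points $0<t_1<\dots<t_\ell$ and $\eps>0$, a union bound yields, for $x$ close to~$1$,
\[
\mu_x\bigl\{\lambda:\ |\tphi_{x;\lambda}(t_j)-\varphi(t_j)|>\eps\text{ for some }j\bigr\}\le\ell\,e^{-(1-x)^{-\beta/2}}\longrightarrow0\qquad(x\nearrow1),
\]
which is exactly ergodicity of $\mu_x$ with scaling $\alpha_x=1/(1-x)$ and limit shape~$\varphi$.

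For the negative part it suffices, by Proposition~\ref{prop:nonerg}, to show that $F$ has an isolated pole at $\rad$. By Proposition~\ref{prop:radius}, $\rad=\rad_1<1$. Writing $F(x)=f(x)\prod_{k\ge2}f(x^k)^{b_k}$, each factor with $k\ge2$ is analytic in $|x|<\rad_1^{1/k}$, and since $\rad_1<1$ the smallest of these radii is $\rad_1^{1/2}>\rad_1$; by the domination argument used in the proof of Proposition~\ref{prop:radius}, the product $\prod_{k\ge2}f(x^k)^{b_k}$ converges to an analytic function on $|x|<\rad_1^{1/2}$, whose value at $x=\rad_1$ is $\prod_{k\ge2}f(\rad_1^{\,k})^{b_k}\in(0,\infty)$ because $\rad_1^{\,k}<\rad_1$ for $k\ge2$. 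Hence $F$ is the product of a function with an isolated pole at $\rad_1$ (this is the hypothesis on $f$) and an analytic function that does not vanish at $\rad_1$, so $F$ has an isolated pole at $\rad=\rad_1$, and Proposition~\ref{prop:nonerg} applies. I expect no serious obstacle anywhere; the only steps that demand a little care are the Tonelli bookkeeping behind $\int_0^\infty\varphi=1$ and the factorization argument showing that $F$ inherits an isolated pole from~$f$.
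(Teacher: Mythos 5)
Your proof is correct and takes the same route as the paper's: Lemma~\ref{lem:muxdev} plus a union bound for the positive part, and Proposition~\ref{prop:nonerg} for the negative part. The paper's proof is extremely terse, and you helpfully close two small gaps it leaves to the reader: the Tonelli computation confirming $\int_0^\infty\varphi(t)\,dt=1$ (required by the definition of ergodicity, and reducing exactly to the definition of $\Omega$ in~\eqref{eq:ExN}), and the factorization $F(x)=f(x)\prod_{k\ge2}f(x^k)^{b_k}$, which shows that a pole of $f$ at $\rad_1<1$ forces an isolated pole of $F$ there --- the hypothesis that Proposition~\ref{prop:nonerg} actually needs --- since the tail product is analytic and nonvanishing in $|x|<\rad_1^{1/2}$.
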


\begin{proof}
Lemma~\ref{lem:muxdev} gives the exponential upper bound for the probability
of deviation greater than $\eps>0$ of $\tphi_{x;\lambda}(t)$ from $\varphi(t)$.  
Consequently the
probability of deviation greater than $\eps$ in
finite number of points still decays exponentially as $x\to1$. 

The last statement follows from Proposition~\ref{prop:nonerg}.
\end{proof}

\begin{remark}
If $\rad_1<1$ and $f$ has an essential singularity at~$\rad_1$
then ergodic case is possible: take, say, $f(x)=e^{1/(1-2x)}$ and $b_k=1$.
\end{remark}

\section{Ergodicity in the small canonical ensemble}\label{sec:ergsce}

In order to approximate measures $\mu^{(n)}$ by measures $\mu_x$ we want to 
choose $x$ depending on $n$ to maximize $\mu_x\CP(n) =a_nx^n/F(x)$.  
Differentiation with respect to $x$ shows that it is achieved
 at $x=x_n$, a solution of equation
\begin{equation}\label{eq:defxn}
n=\BE_{x_n}N=\frac{x_nF'(x_n)}{F(x_n)}\,.
\end{equation}
Note that this solution always exists and is unique since $\BE_xN$
strictly increase in~$x$. 
Lemma~\ref{lem:ExN} and \cite[Prop.~1.5.15]{BGT} shows that for
$\rad_1\ge 1$ in the settings of Lemma~\ref{lem:ExN} there
exists a slowly varying function~$\ell_1$ such that 
\begin{equation}\label{eq:taun}
\tau_n:=1-x_n=\frac{\ell_1(n)}{n^{1/(\beta+1)}}\,.
\end{equation}
In the most simple case when $\ell(k)\equiv1$ this simplifies to 
$\tau_n\sim (\Omega/n)^{1/(\beta+1)}$, in the general case 
$\ell_1$ is connected to the de~Bruijn conjugate of $\ell$, see~\cite{BGT}. 

Theoretically it could happen that the maximal probability 
$\mu_{x_n}\CP(n)$ is still small
to guarantee that the conditional measures $\mu^{(n)}=\mu_{x_n}\big|_{\CP(n)}$ 
exhibit the same behavior as unconditional ones. To eliminate this possibility
we use the local limit theorem for $N$. This result is much stronger than 
needed however it is interesting in itself.  

Note that equation \eqref{eq:varN-sum} implies that $\Var_{x}N\to\infty$ 
as $x\nearrow 1$.  
Moreover this equation and arguments similar to the proof 
of Lemma~\ref{lem:ExN} yields that 
\begin{equation}\label{eq:varxN}
\Var_{x}N=\smash[b]{\sum_{k=1}^\infty 
k^2b_k(x^{k}h(x^{k})+x^{2k}h'(x^{k}))\\
\sim \frac{\ell(1/(1-x))}{(1-x)^{\beta+2}}\sigma^2}
\end{equation}
where
\[
\sigma^2=\int_0^1\left(2|\log u|^{\beta+1}\left(h(u)+uh'(u)\right)
  -|\log u|^{\beta+2}\left(h(u)+3uh'(u)+u^2h''(u)\right)\right)du.
\]
The integral is convergent even if $\rad_1=1$ because in this case
$h'(u)$ has a pole of order~2 and $h''(u)$ has a pole of order 3 at $u=1$
and both $\int^1 |\log u|^{\beta+1}h'(u)du$ and 
$\int^1 |\log u|^{\beta+2}h''(u)du$ converge.

\begin{lemma}[Local limit theorem]\label{lem:lltN}
Let measures $\mu_x$ be defined by decomposition \eqref{eq:ourF} 
where either the radius of convergence $\rad_1>1$ 
or $\rad_1=1$ and additionally $f$ has a pole at~$1$. Let the
sequence $b_k$ satisfy both conditions~\eqref{eq:Bk} and~\eqref{eq:Bkreg}
and additionally either $\beta>2$ or $0<\beta\le2$ and 
\eqref{eq:Bkdetailed} holds for some $\zeta>1-\beta/2$.

Suppose that integer-valued function $m(x)$ grows so that 
\[
\frac{m(x)-\BE_xN}{\sqrt{\Var_xN}}\to u\,,\qquad x\nearrow 1\,,
\]
for some constant~$u$. Then
\[
\sqrt{\Var_xN}\,\mu_x\CP(m(x))-\frac{1}{2\pi}e^{-\frac{u^2}2}\to 0\,,
\qquad x\nearrow 1\,.
\]
\end{lemma}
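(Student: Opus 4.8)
The plan is to prove this local limit theorem by the classical characteristic-function (Fourier-inversion) method, exploiting the product structure of $\mu_x$. Write $N=\sum_k kR_k$, where the $R_k$ are independent under $\mu_x$, and let $\phi_x(\vartheta)=\BE_x e^{\ii\vartheta N}=\prod_{k=1}^\infty f(x^k e^{\ii\vartheta k})^{b_k}/f(x^k)^{b_k}$ be its characteristic function; since $N$ is integer-valued, Fourier inversion gives $\mu_x\CP(m(x))=\frac{1}{2\pi}\int_{-\pi}^{\pi}e^{-\ii\vartheta m(x)}\phi_x(\vartheta)\,d\vartheta$. Rescaling $\vartheta=\xi/\sqrt{\Var_x N}$, the goal becomes to show $\frac{1}{2\pi}\int e^{-\ii\xi(m(x)-\BE_xN)/\sqrt{\Var_xN}}\,\phi_x(\xi/\sqrt{\Var_xN})\,e^{\ii\xi\BE_xN/\sqrt{\Var_xN}}\,d\xi$ converges to $\frac{1}{2\pi}\int e^{-\ii\xi u}e^{-\xi^2/2}\,d\xi=\frac{1}{\sqrt{2\pi}}e^{-u^2/2}$ — wait, one must be careful with the $2\pi$ normalization; the stated limit is $\frac{1}{2\pi}e^{-u^2/2}$, which forces the rescaled integral to converge to $\sqrt{2\pi}\,e^{-u^2/2}$, i.e. the correct Gaussian integral $\int e^{-\xi^2/2}d\xi=\sqrt{2\pi}$ appears and the extra $2\pi$ in the denominator is kept as in the statement. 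The standard three-range split is: (i) a central range $|\vartheta|\le \vartheta_0$ for some $\vartheta_0=\vartheta_0(x)\to0$ slowly, where a Taylor/Lyapunov-type expansion of $\log\phi_x$ gives $\log\phi_x(\vartheta)=\ii\vartheta\BE_xN-\tfrac12\vartheta^2\Var_xN+o(\vartheta^2\Var_xN)$ uniformly; (ii) an intermediate range $\vartheta_0\le|\vartheta|\le\vartheta_1$ where $|\phi_x(\vartheta)|$ decays like $e^{-c\,\vartheta^2\Var_xN}$; and (iii) a far range $\vartheta_1\le|\vartheta|\le\pi$ where one shows $|\phi_x(\vartheta)|$ is exponentially small in a power of $1/(1-x)$.

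The central-range estimate is handled just as in the proof of Lemma~\ref{lem:muxdev}: writing $\log\phi_x(\vartheta)=\sum_k b_k(\log f(x^ke^{\ii\vartheta k})-\log f(x^k))$, one Taylor-expands each summand to third order in $\vartheta k$, uses that the first two terms reproduce $\ii\vartheta\BE_xN-\tfrac12\vartheta^2\Var_xN$ via~\eqref{eq:EN-sum} and~\eqref{eq:varN-sum}, and bounds the remainder using the asymptotics of $\sum_k k^3b_k x^k|h\text{-derivatives}|$; by regular variation of $B_k$ this cubic sum is of order $\ell(1/(1-x))(1-x)^{-\beta-3}$, which is $o\bigl((\Var_xN)^{3/2}\bigr)=o\bigl((1-x)^{-3(\beta+2)/2}\ell^{3/2}\bigr)$ precisely when $\beta+3<\tfrac32(\beta+2)$, i.e. $\beta>0$ — so condition~\eqref{eq:Bk} alone suffices here, and dominated convergence then yields the Gaussian integral over the rescaled central range. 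For the far range the honest work is needed: here is where condition~\eqref{eq:Bkreg} enters. One needs a lower bound of the form $\Re\bigl(\log f(x^k)-\log f(x^ke^{\ii\vartheta k})\bigr)\ge c$ for a positive proportion of the indices $k\lesssim 1/(1-x)$, uniformly for $\vartheta$ bounded away from $0$ modulo the relevant period; the set of ``bad'' $k$ for which $\vartheta k$ is close to a multiple of $2\pi$ is exactly an arithmetic-progression-type set $K_s$ with $s=2\pi/\vartheta$ (or its dilates), and~\eqref{eq:Bkreg} guarantees $\sum_{j\le k,\,j\in K_s}b_j\le\chi B_k$, so the complementary ``good'' indices carry mass $(1-\chi)B_k$ and contribute $|\phi_x(\vartheta)|\le e^{-c(1-\chi)B_k}\le e^{-c'\,\ell(1/(1-x))(1-x)^{-\beta}}$, which beats the $\sqrt{\Var_xN}\asymp(1-x)^{-(\beta+2)/2}$ prefactor. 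The pole-at-$1$ (or $\rad_1>1$) hypothesis is what makes $f(x^k e^{\ii\vartheta k})/f(x^k)$ genuinely bounded away from $1$ in modulus for the good $k$ — without it, $f$ could be too flat near the boundary.

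I expect the main obstacle to be the intermediate range when $0<\beta\le 2$, which is exactly why~\eqref{eq:Bkdetailed} with $\zeta>1-\beta/2$ is imposed. In this regime the error term $o(\vartheta^2\Var_xN)$ from the crude regular-variation estimate in the central range is not small enough to be pushed out to the scale $\vartheta_0$ where the far-range argument takes over; one instead needs the sharper asymptotics $B_k=\theta k^\beta+O(k^{\beta-\zeta})$ to control the sum $\sum_k k^3 b_k(\cdots)$ (equivalently, after summation by parts, $\sum_k B_k\cdot(\text{third difference})$) with a power-saving error, and the condition $\zeta>1-\beta/2$ is precisely the threshold at which this power-saving error, of order $(1-x)^{-\beta-3+\zeta}\ell$, becomes $o\bigl((\Var_xN)^{3/2}\bigr)$ after accounting for the loss $(1-x)^{-3/2}$ in $(\Var_xN)^{3/2}$ versus the naive scaling. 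The remaining steps — verifying that $\Var_xN\to\infty$ (already noted via~\eqref{eq:varN-sum}), that the $\vartheta$-integral over the full rescaled range is dominated by an integrable Gaussian-type majorant so dominated convergence applies, and assembling the three ranges — are routine once the two boundary estimates are in place. I would present the far-range lemma (using~\eqref{eq:Bkreg}) as a separate sublemma, then the central expansion (using~\eqref{eq:Bk} or~\eqref{eq:Bkdetailed} as appropriate), and finally glue them by Fourier inversion.
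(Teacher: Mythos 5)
Your overall framework—Fourier inversion of the characteristic function of $N$, then a split of the integration range into a central window handled by Taylor expansion, an intermediate window, and a far window handled via the arithmetic-progression structure of $K_s$—is exactly what the paper does. The paper splits $[0,\pi]$ at the cutoffs $\delta_1(x)=(1-x)^{1+\beta/2-\alpha_1}$, $\delta_2(x)=1-x$, $\delta_3(x)=(1-x)^{\alpha_3}$, giving four integrals $I_1,\dots,I_4$; your ``central/intermediate/far'' corresponds to $I_1$, then $I_2\cup I_3$, then $I_4$, and your identification of the role of condition~\eqref{eq:Bkreg} in the far range (the bad indices lie in $K_{2\pi/t}$, and~\eqref{eq:Bkreg} gives the surviving mass $(1-\chi)B_k$) is precisely how the paper bounds $|I_4|$. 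Your observation that the central expansion needs only~\eqref{eq:Bk} for the cubic remainder is also correct and matches the role of $\alpha_1<\beta/2$ in the paper.

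Where your plan goes wrong is in your explanation of why~\eqref{eq:Bkdetailed} is needed when $0<\beta\le2$. You propose to use the sharper asymptotics $B_k=\theta k^\beta+O(k^{\beta-\zeta})$ to control the \emph{Taylor remainder} $\sum_k k^3b_k(\cdots)$ and thereby push the central expansion further out. This does not work, and the obstruction is structural: once $t$ reaches the scale $1-x$ and beyond, the quantity $kt$ for the relevant $k\asymp 1/(1-x)$ is no longer small, so the third-order expansion of $\log f(x^ke^{\ii kt})$ is simply not available, regardless of how well the cubic coefficient is controlled. No power-saving estimate on $\sum_k B_k\cdot(\text{third difference})$ repairs this. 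What the paper actually does in $I_3$ (the window $[1-x,(1-x)^{\alpha_3}]$) is bypass the expansion entirely and instead lower-bound $-\log|f(x^ke^{\ii kt})/f(x^k)|\ge c\,x^k(1-\cos kt)$ directly via inequality~\eqref{eq:lwrbnd3}, restricting attention to the intervals $\mathfrak{I}_j=[m_0(j),m_1(j)]$ of $k$ on which $1-\cos kt>\tfrac12$. Condition~\eqref{eq:Bkdetailed} is then used to prove a \emph{lower} bound on the short-interval increments $B_{m_1(j)}-B_{m_0(j)}\gtrsim j^{\beta-1}t^{-\beta}$: the error term $O(k^{\beta-\zeta})$ must be beaten by the main term on the length scale $\asymp 1/t$, and the inequality $\zeta>1-\beta/2$ is exactly the condition that permits choosing $\alpha_3\in(1-\zeta,\beta/2)$ so that both this lower bound and the far-range ($I_4$) bound hold simultaneously. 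In short,~\eqref{eq:Bkdetailed} controls the \emph{distribution} of the mass $b_k$ over short arithmetic scales, not the magnitude of a Taylor remainder; your plan as written has a genuine gap here, and if carried out it would stall on $I_3$.

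Two smaller remarks. First, you correctly sensed tension in the $\tfrac{1}{2\pi}$ normalization but did not commit; the paper's own calculation yields $I_1\sim(2\pi\Var_xN)^{-1/2}e^{-u^2/2}$, so the constant in the statement should read $1/\sqrt{2\pi}$, and what you computed is the right value. Second, the paper notes that for $\beta>2$ one may take $\alpha_3=1$, collapsing the intermediate window entirely ($I_3=0$), which is the clean reason~\eqref{eq:Bkdetailed} is only needed for $0<\beta\le2$; it is worth making this explicit in your write-up rather than treating the two cases symmetrically.
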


\begin{proof}
We start with the Cauchy formula for $a_m$ where we take the circle of radius
$x$ as the integration path:
\[
a_m=\frac{1}{2\pi}\int_{-\pi}^\pi F(xe^{\ii t})x^{-m}e^{-\ii mt}dt\,.
\]
Using product representation \eqref{eq:ourF} and expressing $\BE_xN$ in terms
of $f$ by \eqref{eq:EN-sum} gives
\begin{equation}\label{eq:lwrbnd1}
\begin{split}
\mu_{x}\CP(m)&{}=\frac{a_mx^{m}}{F(x)}
=\frac{1}{2\pi}\int_{-\pi}^\pi e^{-\ii mt}\prod_{k=1}^\infty 
\frac{f(x^{k}e^{\ii kt})^{b_k}}{f(x^{k})^{b_k}}\,dt\\
&{}=\frac{1}{2\pi}\int_{-\pi}^\pi \exp\left(\ii(\BE_xN-m)t+\sum_{k=1}^\infty 
b_k\left(\log\frac{f(x^{k}e^{\ii kt})}{f(x^{k})}
-\ii kt\frac{x^{k}f'(x^{k})}{f(x^{k})}\right)\right)dt\,.
\end{split}
\end{equation}
The function under the integral sign 
in~\eqref{eq:lwrbnd1} sends values of~$t$ with opposite signs 
to complex conjugates. Thus
taking the real part does not change the value of the integral and
changing the integration interval to $[0,\pi]$ halves its value.  
So we can write
\[
\mu_{x}\CP(m)=I_1+I_2+I_3+I_4\ge I_1-|I_2|-|I_3|-|I_4|
\]
where for some 
$0=\delta_0(x)\le\delta_1(x)\le\delta_2(x)\le\delta_3(x)\le\delta_4(x)
=\pi$ we denote for $j=1,2,3,4$
\begin{equation}\label{eq:lwrbnd1a}
I_j=\frac{1}{\pi}\int_{\delta_{j-1}(x)}^{\delta_j(x)}
\Re\exp\left(\ii(\BE_xN-m)t+\sum_{k=1}^\infty 
b_k\left(\log\frac{f(x^{k}e^{\ii kt})}{f(x^{k})}
-\ii kt\frac{x^{k}f'(x^{k})}{f(x^{k})}\right)\right)dt \,.
\end{equation}

We define $\delta_i(x)$ as 
\[
\delta_1(x)=(1-x)^{1+\beta/2-\alpha_1},\qquad
\delta_2(x)=1-x,\qquad
\delta_3(x)=(1-x)^{\alpha_3}
\]
where the obvious inequalities which provide the right order for $\delta_i$ are
$0<\alpha_1\le\beta/2$ and $0<\alpha_3\le 1$. Exact values of $\alpha_1$ and
$\alpha_3$ will be chosen later.

\smallskip
First we show that $\alpha_1$ can be chosen so that  $I_1$ gives the
main contribution and then put an upper bound on other integrals.  
Since $0<t<\delta_1(x)\searrow 0$ as $x\nearrow 1$, in order to estimate the sum 
in the exponent in equation~\eqref{eq:lwrbnd1a} for $j=1$
we are going to find $k_0=k_0(x)$ such that for
$k\le k_0$ each summand can be efficiently estimated using the Taylor formula
and the sum over $k>k_0$ is small.  
To this end we take $k_0(x)=\intfloor{(1-x)^{-1-\eps_1}}$ for some
$\eps_1>0$ which exact value will be specified later.  Then for all
$k\ge k_0$ 
\begin{equation}\label{eq:kgek0}
x^k
%=(1-(1-x))^k 
%=\left(1-\frac{\ell_1(n)}{n^{1/(1+\beta)}}\right)^k
=\left(1-(1-x)\right)^{(1-x)^{-1}k(1-x)}
\le e^{-k(1-x)}\le e^{-(1-x)^{-\eps_1}}\searrow 0
\end{equation}
in view of inequality $(1-y)^{1/y}\le e^{-1}$ valid for $y\in(0,1]$.
On the other hand, if $k\le k_0$ and $0<t<\delta_1(x)$ then 
once $\alpha_1+\eps_1<\beta/2$
\begin{equation}\label{eq:klek0}
kt\le (1-x)^{\beta/2-\alpha_1-\eps_1}\searrow 0\,.
\end{equation}

If $\alpha_1+\eps_1<\beta/2$ then expression~\eqref{eq:varN-sum} allows 
to estimate the sum
in the exponent of \eqref{eq:lwrbnd1a} as follows:
\begin{multline}\label{eq:sumunderxpeq}
\sum_{k=1}^\infty b_k
\left( 
\log\frac{f(x^{k}e^{\ii kt})}{f(x^{k})}
-\ii kt\frac{x^{k}f'(x^{k})}{f(x^{k})}\right)
+\frac{t^2}{2}\Var_xN\\
{}=\sum_{k=1}^\infty b_k\left( 
\log\frac{f(x_n^{\,k}e^{\ii kt})}{f(x_n^{\,k})}
-\ii ktx^{k}h(x^{k})
+\frac{k^2t^2}{2}\left(x^{k}h(x^{k})+x^{2k}h'(x^{k})\right)\right)\,.
\end{multline}
For $x$ close enough to~1 inequalities~\eqref{eq:kgek0} 
and~\eqref{eq:klek0} guarantee that $\log f(x^ke^{\ii kt})$ is analytic 
hence the following Taylor expansions are valid: for $k\le k_0(x)$ and
$0\le t\le\delta_1(x)$
\begin{align*}
\log{}\frac{f(x^ke^{\ii kt})}{f(x^k)}
&{}=\ii ktx^kh(x^k)-\frac{k^2t^2}{2}\left(x^kh(x^k)+x^{2k}h'(x^k)\right)
-\frac{\ii k^3}{2}\int_0^th_1(x^ke^{\ii ks})(t-s)^2ds\\
\intertext{and for $k>k_0$ and for any real $t$}
\log f(x^ke^{\ii kt})
&{}=\log f(x^k)+x^k(e^{\ii kt}-1)h(x^k)
+\smash[t]{\int_{x^k}^{x^ke^{\ii kt}}}h'(z)(x^ke^{\ii kt}-z)dz
\end{align*}
where $h_1(z)=zh(z)+3z^2h'(z)+z^3h''(z)$.  Thus 
\begin{equation}\label{eq:sumunderxple}
\begin{split}
\sum_{k=1}^\infty b_k
&\left( 
\log\frac{f(x^{k}e^{\ii kt})}{f(x^{k})}
-\ii kt\frac{x^{k}f'(x^{k})}{f(x^{k})}\right)
+\frac{t^2}{2}\Var_xN\\
&{}\le \sum_{k=1}^{k_0}\frac{k^3b_k}{2}\int_0^t
\left|h_1(x^ke^{\ii ks})\right|(t-s)^2ds\\
&\qquad{}+\sum_{k=k_0+1}^\infty b_k\left(
\frac{k^2t^2}{2}x^k\left|h(x^k)\right|
+k\int_{0}^{t}\left|h'(x^ke^{\ii ks})\right|x^{2k}
  \left|e^{\ii kt}-e^{\ii ks}\right|\,ds\right)\\
&{}\le c_1\sum_{k=1}^{k_0}k^3b_kt^3\left(
\left|h_1(x^k)\right|+x^k\right)
+c_2\sum_{k=k_0+1}^\infty \left(k^2b_kt^2x^k+k^3b_kt^kx^{2k}\right)\\
&{}\le c_3t^2 \ell(1/(1-x))(1-x)^{-2-\beta/2-\alpha_1}
=O\left(t^2(1-x)^{\beta/2-\alpha_1}\Var_xN\right)
\end{split}
\end{equation}
in view of inequalities $\left|e^{\ii s}-1-\ii s\right|\le s^2/2$ 
valid for all real $s$,
$\left|h_1(x^{k}e^{\ii ks})\right|\le 
c_1(\left|h_1(x^{k})\right|+x^{k})$ valid for $k\le k_0(x)$ and 
$s\in[0,\delta_1(x)]$, and $\left|h(x^k)\right|<c_2$, 
$\left|h(x^ke^{\ii ks})\right|<c_2$ valid for all $k>k_0(x)$ and all real $s$.
(Here and below $c_1,c_2,\dots$ are some positive constants.) The sum from
$0$ to $k_0$ is estimated by an integral as above and inequality 
$t<\delta_1(x)$ is applied; the sum over $k>k_0$ is exponentially small 
by~\eqref{eq:kgek0}.

Inequality \eqref{eq:sumunderxple} allows us to estimate $I_1$ as follows:
\begin{align*}
I_1&{}=\frac{1}{\pi}\int_{0}^{\delta_1(x)}
\Re\exp\left(\ii(\BE_xN-m)t+\sum_{k=1}^\infty 
b_k\left(\log\frac{f(x^{k}e^{\ii kt})}{f(x^{k})}
-\ii kt\frac{x^{k}f'(x^{k})}{f(x^{k})}\right)\right)dt \\
&{}=\frac{1}{\pi}\int_0^{\delta_1(x)}\Re\exp\left(\ii(\BE_xN-m)t
-\frac{t^2\Var_{x}N}{2}\left(1+
   O\left((1-x)^{\beta/2-\alpha_1}\right)
\right)\right)dt\\
&{}\sim\frac{1}{\pi\sqrt{\Var_{x}N}}\int_0^{\infty}
  \Re\exp\left(-\ii u t-\frac{t^2}{2}\right)ds\\
&{}=\frac{1}{\sqrt{2\pi\Var_xN}}\,e^{-\frac{u^2}{2}}.
\end{align*}

\smallskip

For integrals $I_2$, $I_3$ and $I_4$ we are going to find an exponential
bound
\begin{equation}\label{eq:boundI234}
|I_j|\le \exp\left(-(1-x)^{-\beta/2}\right),\qquad j=2,3,4.
\end{equation}
These bounds are based on the same estimate:
\begin{equation}\label{eq:lwrbnd2}
\begin{split}
|I_j|&{}\le \frac{1}{\pi} \int_{\delta_{j-1}(x)}^{\delta_j(x)} \left|
\exp\sum_{k=1}^\infty b_k\log \frac{f(x^{k} e^{\ii kt})}{f(x^{k})}
  -\ii kt\frac{x^{k}f'(x^{k})}{f(x^{k})}
\right|dt\\
&{}\le \frac{1}\pi \int_{\delta_{j-1}(x)}^{\delta_j(x)}
 \exp\left(\sum_{k=1}^\infty b_k
   \Re \log\frac{f(x^{k}e^{\ii kt})}{f(x^{k})}\right)dt\\
&{}= \frac{1}\pi  \int_{\delta_{j-1}(x)}^{\delta_j(x)} 
\exp\left(\sum_{k=1}^\infty b_k
   \log\left|\frac{f(x^{k}e^{\ii kt})}{f(x^{k})}\right|\right)dt\,.
\end{split}
\end{equation}
Since the Taylor coefficients
of $f$ are nonnegative, each summand in the exponent is nonpositive.  Moreover,
since the first Taylor coefficient $g_1>0$ and thus for all $y\in(0,1)$ and
real~$t$
\begin{multline}\label{eq:lwrbnd3}
-\log\left|\frac{f(ye^{\ii s})}{f(y)}\right|
=-\log \left(1-\frac{f(y)-|f(ye^{\ii s})|}{f(y)}\right)\\
\ge \frac{f(y)-|f(ye^{\ii s})|}{f(y)}
=\frac{f(y)^2-|f(ye^{\ii s})|^2}{f(y)(f(y)+|f(ye^{\ii s})|)}
\ge \frac{g_1y(1-\cos s)}{f(y)^2}
\end{multline}
because 
\[
f(y)^2-|f(ye^{\ii s})|^2=2\sum_{j>k\ge0}g_jg_ky^{j+k}(1-\cos((j-k)s))
\ge 2g_0g_1y(1-\cos s)\,.
\]
If $\rad_1>1$ then $f(y)$ is bounded in the left neighborhood of~1;
if $\rad_1=1$ and $f(y)$ has a pole of order $m$ then $f(y)\le c_3(1-y)^{-m}$.
Taking $m=0$ if $\rad_1>1$ 
implies that for some $\tilde x\in(0,1)$ and
all $y\in [\tilde x,1)$ 
\begin{equation}\label{eq:lwrbndat1}
\frac{g_1y}{f(y)^2}\ge c_4(1-y)^{2m}.
\end{equation}
On the other hand, for small $y$ there exists a constant $c_5>0$ such that
\begin{equation}\label{eq:lwrbndat0}
\frac{g_1y}{f(y)^2}\ge c_5y
\end{equation}
It can be shown that this inequality can be extended to the set 
$y\in(0,\tilde x]$ with the same $\tilde x$ as above
(but with smaller $c_5$, possibly), 
however we do not rely on this fact.
 
\smallskip

In order to find constraints on $|I_2|$, $|I_3|$ and $|I_4|$ it suffices to
take just some summands in the sum in the right-hand part of~\eqref{eq:lwrbnd2}
(and replace all other summands by zero).
The right choice differs for these integrals, and we start with  $|I_2|$.

Inequality $1-\cos y\ge 2y^2/\pi^2$ holds for $y\in[-\pi,\pi]$. 
Since $t\le \delta_2(x)$ is small, for $x$ close to~1 a lot of values $kt$ get
into this interval and it suffices to sum only over these $k$.  Namely, we
take $k\le k_1(x)=\intfloor{\eta/(1-x)}$ where 
$\eta=\min\{|\log\tilde x|/(2\log 2),\pi\}$.
For these $k$ and $x$ close to~1, on the one hand
\[
x^{k}=e^{k\log x }\ge e^{-2k(1-x)\log 2}\ge 
e^{-|\log\tilde x|}=\tilde x
\]
since $\log x\ge-2(1-x)\log 2$ for $x\in[1/2,1]$,
and on the other hand $kt\le\pi$ for $t\le \delta_2(x)$.  Hence the bound 
\eqref{eq:lwrbndat1} applies and taking $y=x^{k}$ and $s=kt$ 
in~\eqref{eq:lwrbnd3} yields
\[
-\log\left|\frac{f(x^{k}e^{\ii kt})}{f(x^{k})}\right|
\ge c_4(1-x^{k})^{2m}\,\frac{2(kt)^2}{\pi^2}\,.
\]
It is easier to utilize the Stieltjes integrals instead of summation
by parts in this case and we introduce $B(u)=B_{\intfloor{u}}$ for this purpose.
Integration by parts gives
\begin{equation}\label{eq:lwrbnd4}\begin{split}
\sum_{k=1}^{k_1}b_k(1-x^{k})^{2m}k^2
&{}=\int_{\frac12}^{k_1+\frac12}\left(1-x^{u}\right)^{2m}u^2dB(u)\\
&\hspace{-40pt}=\left.\left(1-x^{u}\right)^{2m}\left(u^2B(u)-2B_1(u)\right)
   \right|_{u=\frac12}^{k_1+\frac12}\\
&-2m|\log x|\int_{\frac12}^{k_1+\frac12}\left(u^2B(u)-2B_1(u)\right)
  x^{u}\left(1-x^{u}\right)^{2m-1}du\\
\end{split}
\end{equation}
where 
\[
B_1(u)=\int_{\frac12}^u vB(v)\,dv \sim \frac{\ell(u)u^{\beta+2}}{\beta+2}
\,,\qquad\qquad u\to\infty\,.
\]
The first summand 
\[
\left.\left(1-x^{u}\right)^{2m}\left(u^2B(u)-2B_1(u)\right)
   \right|_{u=\frac12}^{k_1+\frac12} \sim \frac{\beta}{\beta+2}\left(1-e^{-\eta}\right)^{2m}\ell(k_1)k_1^{\beta+2}
\]
and for $m=0$ equation \eqref{eq:lwrbnd4} gives the desired asymptotics.
For positive $m$ more 
work is needed to show that the difference in the right-hand part 
of~\eqref{eq:lwrbnd4} does not vanish and lower the growth rate.  To this end
we use the following observation:  for any $\eps>0$ and for large 
enough $k_1$ inequality 
$u^2B(u)-2B_1(u)<(1+\eps)\beta\ell(k_1)k_1^{2+\beta}/(2+\beta)$ 
holds for all $u\le k_1$ hence
\begin{multline*}
2m|\log x|\int_{\frac12}^{k_1+\frac12}\left(u^2B(u)-2B_1(u)\right)
  x^{u}\left(1-x^{u}\right)^{2m-1}du\\
=2m|\log x|\left(\int_{\frac12}^{\frac{k_1+1}2}
   +\int_{\frac{k_1+1}2}^{k_1+\frac12}\right)
     \left(u^2B(u)-2B_1(u)\right)
        x^{u}\left(1-x^{u}\right)^{2m-1}du\\
\le (1+\eps)2m|\log x|\ell(k_1)\left(\frac{(k_1+1)^{\beta+2}}{2^{\beta+2}}
       \int_{\frac12}^{\frac{k_1+1}2}
   +(k_1+\tfrac12)^{\beta+2}\int_{\frac{k_1+1}2}^{k_1+\frac12}\right)
        x^{u}\left(1-x^{u}\right)^{2m-1}du\\
\le (1-\eps_1) \frac{\beta}{\beta+2}\left(1-e^{-\eta}\right)^{2m}\ell(k_1)k_1^{\beta+2}
\end{multline*}
for the suitable choice of $\eps_1>0$.  Consequently for $t\le \delta_2(x)$
\[
-\sum_{k=1}^\infty
b_k\log\left|\frac{f(x^{k}e^{\ii kt})}{f(x^{k})}\right|
\ge c_6 t^2\ell(1/(1-x))(1-x)^{-\beta-2}\,.
\]
It follows that for all $t\in[\delta_1(n),\delta_2(n)]$ the exponential
bound~\eqref{eq:boundI234} on $|I_2|$ holds.

\smallskip 
Let us proceed now with $I_4$. 
Suppose that $\delta_3(x)\le t\le\pi$.  For all
$k\ge k_2(x)=\intfloor{|\log\tilde x|/(1-x)+1}$ the inequality 
$x^{k}<\tilde x$
holds and hence bound~\eqref{eq:lwrbndat0} applies.  Thus for any 
$\eps=\eps(n)>0$
%\texttt{add-numbers-to-c}
\[
-\sum_{k=1}^\infty b_k\log\left|\frac{f(x^{k}e^{\ii kt})}{f(x^{k})}\right|
\ge c_7\eps\sum_{\substack{k=k_2\\1-\cos kt>\eps}}^\infty b_kx^{k}.
\]
The strategy is to add also summands for which $1-\cos kt\le \eps$ and to choose
$\eps$ small enough so that condition \eqref{eq:Bkreg} guarantees that
additional summands do not change the asymptotics too much.  
The sum over all $k\ge k_2$ is greater than
by $c_8\ell(1/(1-x))(1-x)^{-\beta}$.
Take $\eps=\delta_3(x)^2/4$, if $1-\cos kt\le \eps$ then there exists
$j\in\mathbb{Z}$ such that $kt-2\pi j\in [-\arccos(1-\eps),\arccos(1-\eps)]$ 
and consequently $k\in K_{2\pi/t}$ 
(recall definition~\eqref{eq:defKs}).  
Now apply the assumption~\eqref{eq:Bkreg} to see that
\[
c_7\eps\sum_{\substack{k=k_2\\1-\cos kt>\eps}}^\infty b_kx^{k}
\ge c_9(1-\chi)\eps \ell(1/(1-x))(1-x)^{-\beta}\,.
\]
If $\alpha_3<\beta/2$ the right-hand side of the above inequality grows
to $\infty$ providing a proper bound~\eqref{eq:boundI234} for~$|I_4|$.

If $\beta>2$ we can take $\alpha_3=1$ and still have an exponential bound 
for $|I_4|$.  But this choice of $\alpha_3$ implies $\delta_2(n)=\delta_3(n)$
and $I_3=0$. Hence the theorem is proved for the case $\beta>2$.
\smallskip

If $0<\beta\le 2$ we still need a bound for $I_3$ and we obtain it
under additional assumption~\eqref{eq:Bkdetailed} on $\ell$, 
that is $\ell(k)=\theta+O(k^{-\zeta})$, $\zeta>1-\beta/2$.   
Choose $\alpha_3$ such that $1-\zeta<\alpha_3<\beta/2$ and
suppose $t\in[\delta_2(x),\delta_3(x)]$.  In order to
estimate $|I_3|$ we consider
a sum over $k$ for which $1-\cos kt$ is large enough but $x^{k}$ is still
not too small.

To be more precise, let us introduce intervals 
$\mathfrak{I}_j=[m_0(j),m_1(j)]$  
where $m_0(j)=\intfloor{\pi(6j+1)/(3t)}$ and $m_1(j)=\intfloor{\pi(6j+5)/(3t)}$.
Then $k\in\mathfrak{I}_j$ implies $1-\cos kt>1/2$ for any $j$.
If $j\ge j_0=\intfloor{|\log \tilde x|t/(2\pi(1-x))}+1$ then for any
$k\in \mathfrak{I}_j$ one has $x^{k}<\tilde x$ and thus inequality 
\eqref{eq:lwrbndat0} applies with $y=x^{k}$. Take $j_1=2j_0$. Then
\[
-\sum_{k=1}^\infty b_k\log\left|\frac{f(x^{k}e^{\ii kt})}{f(x^{k})}\right|
\ge \sum_{j=j_0}^{j_1-1}\sum_{k\in\mathfrak{I}_j}
2c_5x^{k}(1-\cos kt)
\ge c_5\sum_{j=j_0}^{j_1-1}\left(B_{m_1(j)}-B_{m_0(j)}\right)x^{m_1(j)}.
\]
Detailed asymptotics \eqref{eq:Bkdetailed} for $B_k$ implies that
\begin{align*}
B_{m_1(j)}-B_{m_0(j)}\ge {}&\theta\left(\intfloor{\pi(6j+5)/(3t)}^\beta-\intfloor{\pi(6j+1)/(3t)}^\beta\right)
-c_{10}\frac{j^{\beta-\zeta}}{t^{\beta-\zeta}}\\
\ge {}& c_{11}\frac{j^{\beta-1}}{t^{\beta}}
-c_{10}\frac{j^{\beta-\zeta}}{t^{\beta-\zeta}}
=c_{11}\frac{j^{\beta-1}}{t^\beta}\left(1-\frac{c_{10}}{c_{11}}\,
j^{1-\zeta}{t^\zeta}\right)\,.
\end{align*}
Since $j<j_1\le |\log \tilde x|t/(\pi(1-x))+2$ and $t<(1-x)^{\alpha_3}$ 
the expression in brackets above is bounded from below 
by $1-c_{12}(1-x)^{\alpha_3+\zeta-1}$ and tends to $1$ as $x\nearrow 1$
by the choice of $\alpha_3$.
At the same time $x^{m_1(j)}\ge \tilde x^4$ for $j<j_1$.
Thus 
$B_{m_1(j)}-B_{m_0(j)}\ge c_{13}j^{\beta-1}t^{-\beta}$ and
\[
-\sum_{k=1}^\infty b_k\log\left|\frac{f(x^{k}e^{\ii kt})}{f(x^{k})}\right|
\ge c_{14}j_0^{\beta}t^{-\beta}\ge c_{15}(1-x)^{-\beta}
\]
providing the inequality~\eqref{eq:boundI234}.  This observation finishes the proof. 
\end{proof}

Actually we just need to know how fast $\mu_{x_n}\CP(n)$ goes to zero.
It follows from Lemma~\ref{lem:lltN} that certain negative power of $n$
provides a lower bound.

\begin{corollary}\label{cor:lwrbnd}
In the settings of Lemma~\ref{lem:lltN} for $n$ large enough
\begin{equation}\label{eq:lwrbnd}
\mu_{x_n}\CP(n)\ge n^{-\gamma}
\end{equation}
 for any $\gamma>\frac{\beta+2}{2\beta+2}$
where $x_n$ is the solution of~\eqref{eq:defxn}.
\end{corollary}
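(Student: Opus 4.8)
The plan is to read off the corollary directly from the local limit theorem of Lemma~\ref{lem:lltN}. First I would apply that lemma with the sequence $m(x_n)=n$ and $u=0$: by the definition~\eqref{eq:defxn} of $x_n$ we have $n=\BE_{x_n}N$ exactly, so the hypothesis $(m(x)-\BE_xN)/\sqrt{\Var_xN}\to u$ holds trivially with $u=0$ as $n\to\infty$ (equivalently $x_n\nearrow1$, which follows from $\tau_n=1-x_n\to0$ by~\eqref{eq:taun}). The lemma then gives
\[
\sqrt{\Var_{x_n}N}\,\mu_{x_n}\CP(n)\longrightarrow \frac{1}{2\pi}\,,
\]
so in particular $\mu_{x_n}\CP(n)\sim \bigl(2\pi\sqrt{\Var_{x_n}N}\bigr)^{-1}$ as $n\to\infty$.

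Next I would convert the growth rate of $\Var_{x_n}N$ into a power of $n$. By~\eqref{eq:varxN}, $\Var_xN\sim \sigma^2\ell(1/(1-x))(1-x)^{-\beta-2}$ as $x\nearrow1$; substituting $x=x_n$ and using $1-x_n=\tau_n=\ell_1(n)n^{-1/(\beta+1)}$ from~\eqref{eq:taun} gives
\[
\Var_{x_n}N\sim \sigma^2\,\ell\!\bigl(\tau_n^{-1}\bigr)\,\tau_n^{-\beta-2}
= \sigma^2\,\ell\!\bigl(\tau_n^{-1}\bigr)\,\ell_1(n)^{-\beta-2}\,n^{(\beta+2)/(\beta+1)}\,.
\]
Since $\ell$ and $\ell_1$ are slowly varying and $\tau_n^{-1}$ is regularly varying of positive index in $n$, the quantity $\ell(\tau_n^{-1})\ell_1(n)^{-\beta-2}$ is itself slowly varying in $n$; hence $\Var_{x_n}N = n^{(\beta+2)/(\beta+1)}L(n)$ for some slowly varying $L$. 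Therefore $\sqrt{\Var_{x_n}N}=n^{(\beta+2)/(2\beta+2)}\sqrt{L(n)}$, and
\[
\mu_{x_n}\CP(n)\sim \frac{1}{2\pi\,n^{(\beta+2)/(2\beta+2)}\sqrt{L(n)}}\,.
\]

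Finally I would invoke the standard fact that a slowly varying function grows slower than any positive power: for any $\gamma>\frac{\beta+2}{2\beta+2}$, write $\gamma=\frac{\beta+2}{2\beta+2}+\eta$ with $\eta>0$; then $n^{\eta}/\sqrt{L(n)}\to\infty$, so $n^{-\gamma}=o\bigl(n^{-(\beta+2)/(2\beta+2)}L(n)^{-1/2}\bigr)$, which gives $\mu_{x_n}\CP(n)\ge n^{-\gamma}$ for all $n$ large enough. This completes the proof. There is no real obstacle here — the corollary is essentially a restatement of Lemma~\ref{lem:lltN} combined with the variance asymptotics~\eqref{eq:varxN} and the scaling~\eqref{eq:taun}; the only mild point to be careful about is tracking that all the $\ell$, $\ell_1$ factors assemble into a single slowly varying function so that the ``slower than any power'' argument applies cleanly.
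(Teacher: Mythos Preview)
Your proof is correct and follows essentially the same route as the paper: apply Lemma~\ref{lem:lltN} with $m=n$, $x=x_n$ (so $u=0$), use~\eqref{eq:varxN} and~\eqref{eq:taun} to write $\Var_{x_n}N$ as $n^{(\beta+2)/(\beta+1)}$ times a slowly varying factor, and conclude via the standard ``slower than any power'' property of slowly varying functions. The paper's own argument is the same, only more terse; your tracking of the slowly varying factors is accurate.
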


\begin{proof}
The claim follows from Lemma~\ref{lem:lltN} by taking $m=n$ and
$x=x_n$.  Indeed, from \eqref{eq:defxn} and \eqref{eq:varxN} we see that
\begin{equation}\label{eq:varxnN}
\Var_{x_n} N = \ell_2(n)n^{-\frac{\beta+2}{\beta+1}}
\end{equation}
where 
$\ell_2(n)=\sigma^2\ell\bigl(n^{1/(\beta+1)}/\ell_1(n)\bigr)/\ell_1(n)^{\beta+2}$
is slowly varying.  Hence $n^{-\gamma}<1/\sqrt{\Var_{x_n}N}$ for large $n$
by~\cite[Prop.~1.5.1]{BGT} and \eqref{eq:lwrbnd} follows.
\end{proof}

\begin{thm}\label{thm:sceerg}
Let measures $\mu^{(n)}$ induce measures in the grand canonical ensemble
such that the decomposition of $F$ in product can be written in 
form~\eqref{eq:ourF} with $b_k$ satisfying \eqref{eq:Bk}.  
Suppose also that for some
$\gamma>0$ inequality~\eqref{eq:lwrbnd} holds 
{\em(}which is true, in particular, 
in assumptions of Lemma~\ref{lem:lltN}{\em)}. 

In these settings if either $\rad_1>1$ or $\rad_1=1$ and $f$ has 
an isolated pole at~$1$ 
then measures $\mu^{(n)}$ are ergodic
with the scaling function 
\[
\alpha^{(n)}=1/(1-x_n)=\frac{n^{1/(\beta+1)}}{\ell_1(n)}
\] 
where $x_n$ is the solution of equation~\eqref{eq:defxn}
and $\ell_1$ is a slowly varying function defined in~\ref{eq:taun}. 
This choice of scaling function leads
to the limit shape $\varphi$ defined by~\eqref{eq:exphit}.
\end{thm}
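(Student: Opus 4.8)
The plan is to transfer ergodicity from the grand canonical ensemble, established in Theorem~\ref{thm:lceerg}, to the small canonical ensemble, using the lower bound~\eqref{eq:lwrbnd} on $\mu_{x_n}\CP(n)$. The key observation is that, by~\eqref{eq:defmun}, the measure $\mu^{(n)}$ is exactly the conditional measure $\mu_{x_n}(\,\cdot\mid\CP(n))$, where $x_n$ solves~\eqref{eq:defxn}, so that $\BE_{x_n}N=n$. Consequently, with the scaling $\alpha^{(n)}=1/(1-x_n)=\alpha_{x_n}$, the scaled Young diagram $\tphi^{(n)}_\lambda$ of the small canonical ensemble coincides, for every $\lambda\in\CP(n)$, with $\tphi_{x_n;\lambda}$ of the grand canonical ensemble, because the normalizing factors $\alpha^{(n)}/n$ and $\alpha_{x_n}/\BE_{x_n}N$ agree.

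First I would fix continuity points $0<t_1<\dots<t_\ell$ of $\varphi$ and $\eps>0$, and set $A=A^{(n)}=\{\lambda\colon|\tphi_{x_n;\lambda}(t_j)-\varphi(t_j)|\ge\eps\text{ for some }j\}$. Since $A\subseteq\CP$ and $\mu^{(n)}=\mu_{x_n}(\,\cdot\mid\CP(n))$,
\[
\mu^{(n)}(A)=\frac{\mu_{x_n}(A\cap\CP(n))}{\mu_{x_n}\CP(n)}\le\frac{\mu_{x_n}(A)}{\mu_{x_n}\CP(n)}\,.
\]
Lemma~\ref{lem:muxdev} (applicable since $x_n\nearrow1$ as $n\to\infty$, and used with $\eps/2$ in place of $\eps$) together with a union bound over the $\ell$ points yields $\mu_{x_n}(A)\le\ell\,e^{-(1-x_n)^{-\beta/2}}$ for all large $n$, while the hypothesis~\eqref{eq:lwrbnd} gives $\mu_{x_n}\CP(n)\ge n^{-\gamma}$. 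Combining these,
\[
\mu^{(n)}(A)\le \ell\,n^{\gamma}\,e^{-(1-x_n)^{-\beta/2}}\,.
\]

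To conclude, I would invoke~\eqref{eq:taun}: since $1-x_n=\ell_1(n)\,n^{-1/(\beta+1)}$ with $\ell_1$ slowly varying, the exponent $(1-x_n)^{-\beta/2}=n^{\beta/(2(\beta+1))}\ell_1(n)^{-\beta/2}$ grows faster than any constant multiple of $\log n$, so $n^{\gamma}e^{-(1-x_n)^{-\beta/2}}\to0$ and hence $\mu^{(n)}(A)\to0$. This is precisely the statement that $\mu^{(n)}$ is ergodic with scaling function $\alpha^{(n)}=1/(1-x_n)$; and since $\tphi^{(n)}_\lambda(t_j)=\tphi_{x_n;\lambda}(t_j)$ on $\CP(n)$ and $\int_0^\infty\varphi(t)\,dt=1$ (by Theorem~\ref{thm:lceerg}), the limit shape is the same function $\varphi$ given by~\eqref{eq:exphit}.

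I expect this to be essentially routine; the only point needing care is that the exponential decay $e^{-(1-x_n)^{-\beta/2}}$ supplied by Lemma~\ref{lem:muxdev} must overwhelm the polynomial loss $n^{\gamma}$ incurred by conditioning on the rare event $\{N=n\}$. This is exactly what Corollary~\ref{cor:lwrbnd} guarantees: $\mu_{x_n}\CP(n)$ decays only polynomially, a consequence of the local limit theorem (Lemma~\ref{lem:lltN}). Were the decay of $\mu_{x_n}\CP(n)$ merely exponential, the transfer would break down, so the role of the local limit theorem is essential at this step.
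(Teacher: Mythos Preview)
Your proposal is correct and follows essentially the same route as the paper's proof: both transfer the exponential deviation bound of Lemma~\ref{lem:muxdev} from $\mu_{x_n}$ to $\mu^{(n)}$ via the conditioning identity $\mu^{(n)}(A)\le \mu_{x_n}(A)/\mu_{x_n}\CP(n)$, and then use~\eqref{eq:lwrbnd} together with~\eqref{eq:taun} to see that the polynomial factor $n^{\gamma}$ is dominated by the stretched-exponential decay. Your write-up is in fact a bit more explicit about the union bound over the points $t_1,\dots,t_\ell$ and about why the scalings $\tphi^{(n)}_\lambda$ and $\tphi_{x_n;\lambda}$ coincide on $\CP(n)$.
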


\begin{proof}
Lemma \ref{lem:muxdev} gives the exponential bound for $\mu_x\bigl\{\lambda:
\bigl|\tphi_{x;\lambda}(t)-\varphi(t)\bigr|>\eps\bigr\}$. Evaluation at 
$x=x_n$ taking \eqref{eq:taun} into account gives
\begin{equation}\label{eq:sceerg1}
\mu_{x_n}\bigl\{\lambda\colon
  \bigl|\tphi_{x_n;\lambda}(t)-\varphi(t)\bigr|>\eps\bigr\}\le 
   e^{-n^{-\beta/(3\beta+3)}}\,.
\end{equation}
Let $\alpha^{(n)}=\alpha_{x_n}$. Then for $\lambda\in\CP(n)$ the scalings
on the grand canonical and small canonical ensembles coincide and
$\tphi^{(n)}_\lambda (t)\equiv \tphi_{x_n;\lambda}(t)$ so
\begin{align*}
\mu^{(n)}\left\{\lambda:\bigl|\tphi^{(n)}_\lambda (t)-\varphi(t)|>\eps\right\}
&{}=\frac{\mu_{x_n}\left(\left\{\lambda:\bigl|\tphi_{x_n;\lambda}(t)-\varphi(t)|>\eps\right\}
\cap \CP(n)\right)}{\mu_{x_n}\CP(n)}\\
&{}\le\frac{\mu_{x_n}\left\{\lambda:\bigl|\tphi_{x_n;\lambda}(t)-\varphi(t)|>\eps\right\}
}{\mu_{x_n}\CP(n)}\,.
\end{align*}
Inequalities \eqref{eq:sceerg1} and~\eqref{eq:lwrbnd} imply that 
this probability tends to~0 as $n\to\infty$.  
Probability of deviations greater than $\eps$ in
finite number of points is bounded by the number of points times the maximal
probability of deviation greater than $\eps$ and also tends to~0, 
proving the ergodicity.
\end{proof}

The case $\rad_1<1$ is nonergodic in the grand canonical ensemble. However
it seems that measures $\mu^{(n)}$ are still ergodic but the limit shape is 
degenerate. 

\begin{conjecture}\label{conj:sceerg}
If $\rad_1<1$ in the settings of Theorem~\ref{thm:sceerg} and $f$ has 
a pole at $\rad_1$ 
then the possible choice of the scaling function could be 
$\alpha^{(n)}\equiv1$ and it leads to the degenerate limit shape
 $\varphi(t)=\mathbf{1}(t\in[0,1])$.
\end{conjecture}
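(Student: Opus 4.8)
The plan is to reduce the statement to a single concentration estimate for the \emph{excess}
$D(\lambda)=N(\lambda)-\sum_{k\ge1}R_k(\lambda)=\sum_{k\ge2}(k-1)R_k(\lambda)$, namely that $D/n\to0$ in $\mu^{(n)}$-probability. Granting this, for $\lambda\in\CP(n)$ and $t\in(0,1)$ one has $\tphi^{(n)}_\lambda(t)=\frac1n\sum_{k\ge1}R_k(\lambda)=1-\frac1n D(\lambda)$, while for $t>1$ one has $0\le\tphi^{(n)}_\lambda(t)=\frac1n\sum_{k>t}R_k(\lambda)\le\frac1n\sum_{k\ge2}R_k(\lambda)\le\frac1n D(\lambda)$. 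Since $(0,1)$ and $(1,\infty)$ are precisely the continuity points of $\mathbf{1}(t\in[0,1])$, the convergence $D/n\to0$ yields $\tphi^{(n)}_\lambda(t)\to\mathbf{1}(t\in[0,1])$ in probability at every such point, and a union bound over a finite collection of them gives ergodicity with $\alpha^{(n)}\equiv1$ and this degenerate limit shape.

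To obtain $D/n\to0$ I would first work in the grand canonical ensemble at $x=x_n$, where $x_n\nearrow\rad_1$ solves~\eqref{eq:defxn}; note $x_n\to\rad_1$ because the pole of $f$ at $\rad_1$ makes $h(x)=f'(x)/f(x)\to\infty$, hence $\BE_xN\sim\rad_1 h(x)\to\infty$ by Lemma~\ref{lem:ExN}. The key structural point is that for $k\ge2$ and any $x<\rad_1<1$ the argument $x^k$ lies in $[0,\rad_1^2]$, bounded away from the singularity of $f$, so there is room for exponential moments. Fix $u=\tfrac12|\log\rad_1|>0$. Then for every $k\ge2$ and $x<\rad_1$ one has $x^ke^{u(k-1)}\le\rad_1^{(k+1)/2}\le\rad_1^{3/2}<\rad_1$, so $\BE_xe^{u(k-1)R_k}=f(x^ke^{u(k-1)})^{b_k}/f(x^k)^{b_k}$ is finite, and by independence of the counts
\[
\BE_xe^{uD}=\prod_{k\ge2}\frac{f(x^ke^{u(k-1)})^{b_k}}{f(x^k)^{b_k}}
=\exp\Bigl(\sum_{k\ge2}b_k\bigl(\log f(x^ke^{u(k-1)})-\log f(x^k)\bigr)\Bigr).
\]
Each summand is nonnegative and, since $h$ is bounded by some constant $H$ on $[0,\rad_1^{3/2}]$, it is at most $H\,x^k(e^{u(k-1)}-1)\le H\rad_1^{(k+1)/2}$; as $\rad_1<1$ and $b_k\le B_k=k^\beta\ell(k)$ grows subexponentially, the series $\sum_{k\ge2}b_k\rad_1^{(k+1)/2}$ converges. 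Hence $\BE_{x_n}e^{uD}\le C$ uniformly in $n$, and Markov's inequality gives $\mu_{x_n}\{D>\eps n\}\le Ce^{-u\eps n}$ for every $\eps>0$.

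Next I would transfer this bound to $\mu^{(n)}$ exactly as in the proof of Theorem~\ref{thm:sceerg}. Since $D$ depends only on $\lambda$ and $\mu^{(n)}$ is $\mu_{x_n}$ conditioned on $\CP(n)$ by~\eqref{eq:defmun},
\[
\mu^{(n)}\{D>\eps n\}=\frac{\mu_{x_n}\bigl(\{D>\eps n\}\cap\CP(n)\bigr)}{\mu_{x_n}\CP(n)}
\le\frac{Ce^{-u\eps n}}{\mu_{x_n}\CP(n)}\le C\,n^{\gamma}e^{-u\eps n}\,,
\]
where the last inequality uses the lower bound~\eqref{eq:lwrbnd}, which is among the standing hypotheses of Theorem~\ref{thm:sceerg}. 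The right-hand side tends to $0$, so $D/n\to0$ in $\mu^{(n)}$-probability, and the reduction of the first paragraph completes the argument.

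I expect the real obstacle to lie not in the argument above but in making~\eqref{eq:lwrbnd} legitimately available in this regime: Lemma~\ref{lem:lltN} establishes it only for $\rad_1\ge1$, so when $\rad_1<1$ one must supply a separate justification. When the pole of $f$ at $\rad_1$ is isolated and is the only singularity of $f$ on the circle $|z|=\rad_1$, this should be routine: the factorization $F(x)=f(x)\prod_{k\ge2}f(x^k)^{b_k}$ exhibits $F$ as having an isolated pole of the same order $m$ at $\rad=\rad_1$ (the product being analytic near $\rad_1$ and positive there), whence $a_n\sim c\,n^{m-1}\rad^{-n}$ by elementary singularity analysis and a direct computation using~\eqref{eq:defxn} and $x_n\to\rad_1$ gives $\mu_{x_n}\CP(n)\sim c'/n$, so that~\eqref{eq:lwrbnd} holds with any $\gamma>1$. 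If $f$ possesses further singularities on $|z|=\rad_1$, one would instead need an analogue of conditions~\eqref{eq:Bkreg}--\eqref{eq:Bkdetailed} to tame the resulting oscillations of $a_n$; this is the one genuinely non-routine point.
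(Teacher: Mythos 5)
Your overall strategy --- reduce to concentration of an ``excess'' statistic, bound the grand-canonical tail by exponential moments and Markov's inequality, then transfer to $\mu^{(n)}$ by dividing by $\mu_{x_n}\CP(n)$ --- is the same as the paper's (Proposition~\ref{prop:sceerg}). Your reduction via $D(\lambda)=\sum_{k\ge2}(k-1)R_k(\lambda)$ is in fact cleaner: the identity $\tphi^{(n)}_\lambda(t)=1-D(\lambda)/n$ for $t\in(0,1)$ and $\lambda\in\CP(n)$ is exact, whereas the paper works with $\sum_{k\ge2}R_k$ and the step asserting that this controls $\varphi_\lambda(t)$ on $(0,1)$ has an algebraic slip.

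The genuine divergence --- which you correctly flag at the end --- is how one secures a lower bound on $\mu_{x_n}\CP(n)$. You import \eqref{eq:lwrbnd} as a standing hypothesis, but Corollary~\ref{cor:lwrbnd} establishes it only for $\rad_1\ge1$, and the paper does not regard it as available when $\rad_1<1$: the whole point of Proposition~\ref{prop:sceerg} is to prove the conjecture under the supplementary hypothesis $b_k\ge b>0$, and the bulk of that proof is a direct lower bound on $\mu_{x_n}\CP(n)$. The paper obtains it by restricting the sum defining $a_n$ to hook partitions $(n-j,1,\dots,1)$, using $g_{n-j,1}=g_{1,1}b_{n-j}\ge g_{1,1}b$ (this is exactly where $b_k\ge b>0$ enters), and then applying the Hardy--Littlewood--Karamata Tauberian theorem to the partial sums $\sum_{j\le n-1}g_{1,j}\rad^{\,j}$. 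The result is $a_n\ge c\,n^m\rad^{-n(1-\delta)}$ for arbitrary $\delta>0$, hence a bound on $\mu_{x_n}\CP(n)$ that is not polynomial in $n$ but carries an exponentially small factor $\rad^{\,\delta n}$; this still suffices because the Markov tail is $O(e^{-u\eps n})$ and one can take $\delta$ small relative to $u\eps$. Your sketched alternative --- deduce $a_n\sim c\,n^{m-1}\rad^{-n}$ from singularity analysis of $F(x)=f(x)\prod_{k\ge2}f(x^k)^{b_k}$ --- would give a genuinely polynomial bound, but it requires the pole at $\rad_1$ to be the only singularity of $f$ on $|z|=\rad_1$ and requires $f$ to continue analytically past that circle; neither follows from ``$f$ has a pole at $\rad_1$.'' The Tauberian route avoids both requirements (it uses only nonnegativity of the coefficients and one-sided behavior as $x\nearrow\rad_1$), at the cost of the hypothesis $b_k\ge b>0$. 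So your proof is valid as a conditional statement given \eqref{eq:lwrbnd}, but it does not supply the key lemma the paper proves to make that bound available when $\rad_1<1$.
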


We reinforce this conjecture by the following simple statement.

\begin{proposition}\label{prop:sceerg}
Conjecture~\ref{conj:sceerg} is true if all $b_k>b>0$.
\end{proposition}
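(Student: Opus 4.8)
The plan is to show that when $\rad_1<1$, $f$ has a pole at $\rad_1$, and all $b_k>b>0$, the rescaled Young diagram under $\mu^{(n)}$ with trivial scaling $\alpha^{(n)}\equiv 1$ concentrates on the indicator $\mathbf{1}(t\in[0,1])$. The underlying intuition is exactly the one sketched after the proof of Lemma~\ref{lem:ExN}: in the grand canonical ensemble with $x\nearrow\rad_1$, the pole forces $\BE_x R_1$ (and only $\BE_x R_1$) to blow up, so that $\BE_x N\sim\BE_x R_1\sim\rad_1 f'(x)/f(x)$, while $\sum_{k\ge 2}\BE_x R_k$ stays bounded. Thus almost all the mass of a typical partition sits in parts equal to~$1$, and $\varphi_\lambda(t)=\sum_{k>t}R_k(\lambda)$ equals $R_1(\lambda)$ for $t\in(0,1)$ and is bounded for $t\ge 1$; after dividing by $\BE_x N$ this gives the claimed step function. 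The role of the hypothesis $b_k>b>0$ is to control the small canonical ensemble: it guarantees a crude lower bound $\mu_{x_n}\CP(n)\ge c^n$ or at worst a subexponential lower bound (via the elementary estimate that $a_n\ge$ (coefficient contributed by $R_1=n$) $\cdot\prod_{k\ge 2}\bar g_{k,0}$, which is bounded below because $g_{1,1}>0$), so that an exponentially small grand-canonical deviation bound survives conditioning onto $\CP(n)$.

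The key steps, in order, are as follows. First, I would fix the scaling $\alpha_x\equiv 1$ in the grand canonical ensemble and set $x=x_n\nearrow\rad_1$, where $x_n$ solves $n=\BE_{x_n}N$ as in~\eqref{eq:defxn}; such a solution exists and is unique since $\BE_x N$ strictly increases, and $\BE_{x_n}N\to\infty$ as in Proposition~\ref{prop:radius}. Second, using the pole hypothesis and the computation in the proof of Lemma~\ref{lem:ExN}, I would record that $\BE_{x_n} R_1=x_n h(x_n)\sim \rad_1 f'(x_n)/f(x_n)\to\infty$, that $\Var_{x_n}R_1$ is of the same order or smaller relative to $(\BE_{x_n}R_1)^2$ (so $R_1/\BE_{x_n}R_1\to 1$ in probability — in fact, since $f$ has a pole of order $p\ge 1$ at $\rad_1$, $h$ has a simple pole and one checks $\Var_{x_n}R_1\sim p\rad_1^2/(\rad_1-x_n)^2$ is the same order as $(\BE_{x_n}R_1)^2$, so one gets concentration only for $N/\BE_{x_n}N$ around $1$, which is exactly what is needed), and that $M:=\sum_{k\ge 2} R_k$ has bounded expectation uniformly in $n$, hence $M/\BE_{x_n}N\to 0$ in probability by Markov. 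Third, for a fixed continuity point $t>0$ of the target step function, I would observe $\tphi_{x_n;\lambda}(t)=\frac{1}{\BE_{x_n}N}\sum_{k>t}R_k(\lambda)$ equals $\frac{R_1+(\text{bounded})}{\BE_{x_n}N}$ for $t\in(0,1)$ and equals $\frac{\text{bounded}}{\BE_{x_n}N}$ for $t>1$; combined with $\BE_{x_n}N\sim\BE_{x_n}R_1$ this yields convergence in probability to $1$ for $t\in(0,1)$ and to $0$ for $t>1$, i.e. to $\varphi(t)=\mathbf{1}(t\in[0,1])$. Fourth, I would upgrade these in-probability statements to exponentially small deviation probabilities using exponential Chebyshev bounds exactly as in the proof of Lemma~\ref{lem:muxdev} (exponential moments of $R_k$ exist for $x$ near $\rad_1$ since $f$ is analytic at $x^k$ for $k\ge 2$, and for $R_1$ near the pole one works with $x e^u$ staying below $\rad_1$, which is possible because $\eps>0$ is fixed while the normalization $\BE_{x_n}N\to\infty$). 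Finally, I would transfer to $\mu^{(n)}=\mu_{x_n}|_{\CP(n)}$ by dividing by $\mu_{x_n}\CP(n)$, which by the assumption $b_k>b>0$ admits a lower bound decaying slower than any such exponential, so $\mu^{(n)}\{|\tphi^{(n)}_\lambda(t_j)-\varphi(t_j)|>\eps\text{ for some }j\}\to 0$, establishing ergodicity of $\mu^{(n)}$ with the degenerate limit shape.

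The main obstacle I anticipate is the concentration at the pole, namely making precise that $N/\BE_{x_n}N\to 1$ and that the large-part contribution $M$ is genuinely negligible, both with exponentially good tails rather than merely in probability. Near an order-$p$ pole one has $\Var_{x_n}R_1\asymp(\BE_{x_n}R_1)^2$, so Chebyshev alone does not give concentration of $R_1$ itself; what saves the argument is that we only need concentration of $N$ (equivalently of the normalization), and $N$ is a sum of many independent terms whose cumulants can be controlled via the Laurent expansion of $F$ at $\rad$ — essentially the computation already carried out in the proof of Proposition~\ref{prop:nonerg}, where $\BE_x N\sim m\rad/(\rad-x)$ and $\BE_x N^2\sim m(m+1)\rad^2/(\rad-x)^2$. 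So the delicate point is to verify that the exponential-moment generating function of $\tphi_{x_n;\lambda}(t)$, after the $1/\BE_{x_n}N$ rescaling, still admits a good bound near the pole: one must check that $u/\BE_{x_n}N$ times the logarithmic derivative of $f$ at $x_n^k e^{\pm u/\BE_{x_n}N}$ stays controlled for $k=1$, which it does precisely because $\BE_{x_n}N\to\infty$ pushes the perturbed argument back inside the disk of analyticity. Once this single estimate is in hand, everything else is bookkeeping of the kind already done in Lemmas~\ref{lem:muxdev} and~\ref{lem:lltN}.
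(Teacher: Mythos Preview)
Your plan has a genuine gap at the heart of it. You want to prove, in the grand canonical ensemble, that $\tphi_{x_n;\lambda}(t)\to 1$ in probability for $t\in(0,1)$ --- equivalently, that $R_1/\BE_{x_n}N\to 1$ under $\mu_{x_n}$ --- and then upgrade this to an exponential tail bound. But this convergence is simply \emph{false}: it is exactly the content of Proposition~\ref{prop:nonerg} that the grand canonical ensemble is \emph{not} ergodic when $F$ has a pole at $\rad$. Concretely, the Laurent computation you cite gives $\BE_x N\sim m\rad/(\rad-x)$ and $\Var_x N\sim m\rad^2/(\rad-x)^2$, so $\Var_x(N/\BE_x N)\to 1/m\neq 0$; since $R_1\sim N$ in this regime, the same holds for $R_1/\BE_{x_n}N$. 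There is nothing to upgrade, and the attempted save (``we only need concentration of $N$'') fails for the same reason: $N$ does not concentrate either. No argument in the style of Lemma~\ref{lem:muxdev} can produce an exponential deviation bound for a quantity that does not even converge in probability.

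The paper circumvents this by never attempting to control $t\in(0,1)$ in the grand canonical ensemble. It proves an exponential tail bound only for the event $\{\sum_{k\ge 2}R_k\ge n\eps\}$, i.e.\ only for $t>1$; this \emph{does} work because $\sum_{k\ge 2}R_k$ has bounded mean and exponential moments uniformly in $x\nearrow\rad$ (the singular factor $f(x)$ is absent). After transferring to $\mu^{(n)}$ via a lower bound on $\mu_{x_n}\CP(n)$, the constraint $N=n$ is used \emph{deterministically} to force $R_1$ close to $n$, yielding the $t\in(0,1)$ half for free. The hypothesis $b_k>b>0$ enters only in the lower bound on $a_n$: the paper sums over hook partitions $(n-j,1^j)$ to get $a_n\ge\sum_{j<n}g_{1,j}g_{n-j,1}$, and $g_{k,1}=b_k g_{1,1}\ge b\,g_{1,1}$ makes the factor $g_{n-j,1}$ uniformly positive; the remaining sum $\sum_{j<n}g_{1,j}$ is then handled by Hardy--Littlewood--Karamata, producing a bound of order $n^m\rad^{-n(1-\delta)}$ that beats the exponential numerator. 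Your sketch of the lower bound (via the single term $R_1=n$) would in fact also give a polynomial-in-$n$ bound on $\mu_{x_n}\CP(n)$, but that alone is not enough unless you have an exponentially small grand-canonical numerator --- which, again, you only get for $t>1$.
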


\begin{proof}
First of all note that  $\varphi(t)=\mathbf{1}(t\in[0,1])$ would be the limit
shape in the scaling $\alpha^{(n)}=1$ if for any $\eps>0$
$\lim_n\mu^{(n)}\CD_n(\eps)=1$ where 
$\CD_n(\eps)=\bigl\{\lambda\in\CP\colon \varphi_\lambda(t)<n\eps$
for $t\in(1,2)\bigr\}$. Indeed, then the measure of $\lambda\in\CP(n)$ such that
$\tfrac{1}{n}\varphi_\lambda(t)\to0$ 
for all $t>1$ goes to~1 by 
monotonicity of $\varphi_\lambda$
and values of $\frac{1}{n}\varphi_\lambda(t)$,
$t\in(0,1)$, are close to one for $\lambda\in\CD_n(\eps)\cap\CP(n)$ 
since $n=\sum_k kR_k$ $\mu^{(n)}$-almost sure: if $t\in(0,1)$ and 
$\lambda\in\CD_n(\eps)\cap \CP(n)$ then
\[
\varphi_\lambda(t)=n-\sum_{k\ge2}kR_k(\lambda)\ge n-2\sum_{k\ge 2}R_k\ge n(1-2\eps).
\]

Let now $x_n$ be defined by~\eqref{eq:defxn}; taking $\alpha_x=1$
allows us to write 
$\CD_n(\eps)=\{\lambda\in\CP\colon\allowbreak \tphi_{x_n;\lambda}(t)<\eps\text{ 
for }t\in(1,2)\}$.  Let us estimate 
$\mu_{x_n}$ of the complement of this set. 
Take $u\in(0,\log(1/\rad))$ and apply Markov's inequality:
\[
\mu_{x_n}\left(\CP\setminus\CD_n(\eps)\right)=\mu_{x_n}\left\{\lambda\colon 
\exp\left(u\sum\nolimits_{k\ge2}R_k\right)\ge e^{u\eps n}\right\}
\le e^{-u\eps n}\prod_{k\ge 2}\frac{f(x_n^{\,k}e^u)^{b_k}}{f(x_n{\,^k})^{b_k}}.
\]
The product converges by the choice of $u$ and is bounded as $n\to\infty$
(it is checked like it was done in the proof of Lemma~\ref{lem:ExN}).

Let $m$ be the order of pole of $f$ (and $F$) 
at~$\rad=\rad_1$. The Laurent series decomposition~\eqref{eq:laurentF}
and expression \eqref{eq:laurentexN} for the mean of $N$ yields the
asymptotic relation $\rad-x_n\sim m\rad/n$. Hence $x_n^{\,n}\sim \rad^n e^{-m}$ 
and
\[
\mu_{x_n}\CP(n)
=\frac{a_n x_n^{\,n}}{F(x_n)}
\sim \frac{a_n \rad^n e^{-m}n^m}{|c_{-m}|(m\rad)^m},\qquad\qquad n\to\infty.
\]
Our next goal is to find a lower bound for $a_n$. It follows 
from \eqref{eq:defmun} that $a_n$ equals the sum of products in the right-hand
part of~\eqref{eq:defmun} over all partitions $\lambda\in\CP(n)$. Thus the sum
of the same product only over ``hook'' partitions $(n-j,1,1,\dots,1)$, 
$j=0,\dots,n-1$, gives a lower bound for $a_n$. 
The hypothesis $b_k\ge b>0$ allows us to find a bound
for this sum. Indeed,  the first Taylor coefficient $g_{k,1}$ of $f(z)^{b_k}$ 
is positive and moreover $g_{k,1}=g_{1,1}b_k\ge g_{1,1}b$ hence
\begin{equation}\label{eq:psce1}
a_n\ge\sum_{j=0}^{n-1} g_{1,j}g_{n-j,1}\ge g_{1,1}b\sum_{j=0}^{n-1}g_{1,j}\,.
\end{equation}
In order to find a lower bound of the partial sum of Taylor coefficients of
$f(x)$ we use the Hardy--Littlewood--Karamata theorem, see, e.~g.,
\cite[Thm.~XIII.5.5]{Feller2}, which states that
\[
g_{1,0}+\rad g_{1,1}+\dots+\rad^{n-1}g_{1,n-1}\sim \frac{c'_{-m}n^m}{\rad^m m!},
\qquad n\to\infty,
\]
where $c'_{-m}$ is the leading coefficient in the Laurent series for $f$ 
at $\rad$. Applying the same result with $n$ replaced by 
$\intfloor{n(1-\delta)}$, $\delta>0$, we obtain
\[
\rad^{\intfloor{n(1-\delta)}}g_{1,\intfloor{n(1-\delta)}}+\dots
+\rad^{n-1}g_{1,n-1}\sim \frac{c'_{-m}n^m(1-(1-\delta)^m)}{\rad^m m!}\,.
\]
Thus for large~$n$
\[
\frac{a_n}{g_{1,1}b}\ge \sum_{j=0}^{n-1}g_{1,j}
\ge \sum_{j=\intfloor{n(1-\delta)}}^{n-1}g_{1,j}
\ge \sum_{j=\intfloor{n(1-\delta)}}^{n-1}g_{1,j}\rad^{j-\intfloor{n(1-\delta)}}
\ge c_1 n^m\rad^{-n(1-\delta)}
\] 
for some $c_1>0$.  Combining the above estimates we see that
\[
\mu^{(n)}\left(\CP(n)\setminus\CD_n(\eps)\right)
=\frac{\mu_{x_n}\left(\CP(n)\setminus\CD_n(\eps)\right)}{\mu_{x_n}\CP(n)}
\le \frac{\mu_{x_n}\left(\CP\setminus\CD_n(\eps)\right)}{\mu_{x_n}\CP(n)}
\le c_2n^{-2m}e^{(\delta|\log\rad|-\eps u)n}
\]
for some $c_2>0$. Taking $\delta$ and $u$ such that the exponent is negative
shows that Conjecture~\ref{conj:sceerg} holds.
\end{proof}

\section{Examples}\label{sec:examples}
In this section we introduce three examples of families of multiplicative 
measures.  They are obtained from the well-known measures by distinct
deformations.  These deformations can be combined to produce another
examples, and also a different
measure can be taken as a starting point.

\subsection*{Weighted partitions}
Let us consider the measures $\mu^{(n)}$ which are proportional to some
constant $y>0$ to the power of the number of summands in partition. 
It corresponds to the following decomposition~\eqref{eq:Fasprod} of $F$:
\[
F(x)=\prod_{k=1}^\infty \frac{1}{1-yx^k}\,.
\]
 Similar measures were considered in~\cite{VY-MMJ}. 
If $y\le 1$ the convergence radius $\rad_1\ge1$ and the limit shape
is defined by 
\[
\varphi(t)=\frac{-\log(1-ye^{-t})}{\operatorname{Li}_2 y}
\]
with scaling $\alpha^{(n)}=\sqrt{n/\operatorname{Li}_2 y}$
where the dilogarithm $\operatorname{Li}_2 y$ is the normalizing factor.
Taking $y=1$  makes all weights equal and leads to the uniform measures on
partitions.  In this case it is more natural to take symmetric scaling
$\alpha^{(n)}=\sqrt{n}$ which leads to the celebrated limit shape
for the uniform measure on partitions defined by
\[
e^{-c\varphi(t)}+e^{-ct}=1,\qquad\qquad c=\frac{\pi}{\sqrt{6}}\,,
\]
found in \cite{T,ST,V-FAA} as mentioned in the Introduction.

If $y>1$ there is no limit shape in the grand canonical ensemble of partitions:
the distribution of $N$ is asymptotically equivalent to that of $R_1$, so
taking scaling $\alpha_x=1$ leads to the scaled Young diagram close to
the rectangle of unit width and random (asymptotically exponentially 
distributed) height.  In the small canonical ensemble, however, there
is a degenerate ergodicity, as follows from Proposition~\ref{prop:sceerg}
and can be also easily shown combinatorially.  
With the same scaling $\alpha^{(n)}=1$ 
the scaled Young diagram looks like the unit square, i.e.\ ``almost all''
parts in ``almost all'' partitions are ones, and larger parts do not
comprise a notable ratio to the weight, in the asymptotic sense.

\subsection*{Partitions with restricted part sizes}
Another possibility is to take $b_k=\mathbf{1}(k\in\mathcal{S})$ for a certain 
set $\mathcal{S}$ of positive integers.  This choice of $b_k$ 
makes $\mu^{(n)}$ the uniform measure on partitions of~$n$ with
all parts from $\mathcal{S}$. 

The distribution of the number of parts in such partitions has been
studied recently in~\cite{GH} under some assumptions on growth of
$B_k$.   Namely its is shown that if $B_k-ck^\beta$, 
$\beta\in(0,1)$, satisfies some additional condition then the number of parts
in a random partition of $n$ behaves like a nondegenerate random variable
(explicitly specified in~\cite{GH}) multiplied by $n^{1/(1+\beta)}$.
Theorem~\ref{thm:lceerg} shows that if just summands greater than
$tn^{1/(\beta+1)}$, $t>0$ are counted then their number is much less: it is 
proportional to $n^{\beta/(1+\beta)}$ and the coefficient converges in
probability to a constant (depending on~$t$).  It means that in this case 
a generic partition has plenty of small summands which do not contribute
a notable part to the whole sum.  This is related to a physical effect
known as Bose--Einstein condensation, see~\cite{V-UMN}.

\subsection*{Permutations with marked cycles}
As it was mentioned in the Introduction, the uniform measure on permutations 
induces a multiplicative measure on partition by considering partition on
cycle lengths. It is defined by decomposition
\[
F(x)=\frac{1}{1-x}=\prod_{k=1}^\infty e^{x^k/k}
=\prod_{k=1}^\infty \left(e^{x^k}\right)^{1/k}
\]
and hence satisfies~\eqref{eq:ourF} with $b_k=1/k$ 
but not \eqref{eq:Bk} since $\beta=0$.
Taking different $b_k$ in a form $b_k=c_k/k$ with integer $c_k$ 
corresponds to marking cycles of length $k$ in one of $c_k$ ways. In
particular, taking $c_k=k$  can be interpreted as choosing the first
element in each cycle, or, in the other words, making a set of ordered
lists from a permutation.  The numbers of such objects form
sequence A000262 in~\cite{OEIS}.  Other examples of combinatorial 
If one does not insist on a combinatorial
interpretation, it is possible to take real $c_k$, 
say, $c_k=\theta k^\beta$ for
$\beta,\theta>0$. It leads to the fulfillment of the condition~\eqref{eq:Bk} (and 
even~\eqref{eq:Bkdetailed}) with $\ell(k)=\theta/\beta+O(k^{-1})$.  Under this 
assumptions taking the scaling function 
$\alpha^{(n)}=(\theta \Gamma(\beta+1))^{-1/(\beta+1)}n^{1/(\beta+1)}$
leads to the limit shape
\[
\varphi(t)=\frac{\Gamma(\beta+1,t)-te^{-t}}{\beta \Gamma(\beta+1)}
\]
where $\Gamma(\beta,t)=\int_t^\infty u^{\beta-1}e^{-u}du$ is the incomplete
Gamma function.  This limit shape and fluctuations around it were found under
different (but overlapping with ours) assumptions in~\cite{EG08}.

If $\beta=\theta=1$ (i.e.\ the measure is induced by the uniform measure on
partitions of the set $\{1,\dots,n\}$ into ordered lists) the limit shape is the
exponent function ($\varphi(t)=e^{-t}$) in the scaling $\alpha^{(n)}=\sqrt{n}$.
Farther, formally letting $\beta\searrow 0$ 
and keeping $\theta$ fixed we approach the Poisson--Dirichlet distribution
$\mathcal{PD}(\theta)$.  However the limit shape becomes degenerate
(infinity at 0 and zero at $t>0$). It reflects the nonergodicity of the
limiting distribution.

\section*{Acknowledgment}
The author thanks A.~Vershik for helpful discussion of
this and related subjects.

\end{document}